\numberwithin{equation}{section}
\numberwithin{table}{section}
\numberwithin{figure}{section}
\theoremstyle{plain}
\newtheorem{theorem}{Theorem}[section]
\newtheorem{lemma}[theorem]{Lemma}
\theoremstyle{definition}
\newtheorem{definition}{Definition}[section]
\newtheorem{assumption}[definition]{Assumption}
\theoremstyle{remark}
\newcommand{\Realdom}{\mathbf{R}}
\newcommand{\Intdom}{\mathbf{Z}}
\newcommand{\Master}{\mathbb{M}^{T}}
\newcommand{\Masteradj}{\mathbb{M}}
\newcommand{\stoich}{\mathbb{N}}
\DeclareMathOperator{\Expect}{E}
\newcommand{\Probspace}{\Omega}
\newcommand{\Probelem}{\omega}
\newcommand{\Probfiltr}{\mathcal{F}}
\newcommand{\Prob}{\mathbf{P}}
\newcommand{\stopping}{P}
\newcommand{\taustopping}{\tau_{\stopping}}
\renewcommand{\stop}[1]{\hat{#1}}
\newcommand{\loc}{\mathrm{loc}}
\newcommand{\Sspace}[1]{S_{\Probfiltr}^{#1,\loc}(\Intdom_{+}^{D})}
\newcommand{\R}{\mathcal{R}}
\newcommand{\lvec}{\boldsymbol{l}}
\newcommand{\lvect}{\boldsymbol{l}^{T}}
\newcommand{\lnorm}[1]{\left\| #1 \right\|_{\lvec}}
\newcommand{\Vol}{V}
\newcommand{\fatmu}{\boldsymbol{\mu}}
\newcommand{\fatnu}{\boldsymbol{\nu}}
\newcommand{\X}{\boldsymbol{X}}
\newcommand{\stoichd}{\mathbb{S}}
\newcommand{\cadlag}{c{\`a}dl{\`a}g}
\begin{document}

\title[Split-step methods in jump SDEs]{Strong convergence for
  split-step methods in stochastic jump kinetics}

\author[S. Engblom]{Stefan Engblom}
\address{Division of Scientific Computing \\
  Department of Information Technology \\
  Uppsala University \\
  SE-751 05 Uppsala, Sweden.}
\urladdr{\url{http://user.it.uu.se/~stefane}}
\email{stefane@it.uu.se}
\thanks{Corresponding author: S. Engblom, telephone +46-18-471 27 54,
  fax +46-18-51 19 25.}

\subjclass[2010]{Primary: 65C40, 60H35; Secondary: 60J28, 92C45}

%
%
%

\keywords{Operator splitting; Partition of unity; Lie-Trotter formula;
  Continuous-time Markov chain; Jump process; Rate equation}

\date{August 10, 2015}

\begin{abstract}
  Mesoscopic models in the reaction-diffusion framework have gained
  recognition as a viable approach to describing chemical processes in
  cell biology. The resulting computational problem is a
  continuous-time Markov chain on a discrete and typically very large
  state space. Due to the many temporal and spatial scales involved
  many different types of computationally more effective multiscale
  models have been proposed, typically coupling different types of
  descriptions within the Markov chain framework.

  In this work we look at the strong convergence properties of the
  basic first order Strang, or Lie-Trotter, split-step method, which
  is formed by decoupling the dynamics in finite time-steps. Thanks to
  its simplicity and flexibility, this approach has been tried in many
  different combinations.

  We develop explicit sufficient conditions for path-wise
  well-posedness and convergence of the method, including error
  estimates, and we illustrate our findings with numerical
  examples. In doing so, we also suggest a certain partition of unity
  representation for the split-step method, which in turn implies a
  concrete simulation algorithm under which trajectories may be
  compared in a path-wise sense.
\end{abstract}

\selectlanguage{english}

\maketitle


\section{Introduction}

Since their introduction by Gillespie \cite{gillespie, gillespieCME},
stochastic models of chemical reactions have become ubiquitous tools
in describing the kinetics of living cells. Since complete Molecular
Dynamics-type descriptions of most biochemical processes are either
impractical or out of reach for complexity reasons, stochastic models
have remained popular as a viable alternative. Formulated in a way
which resembles the macroscopic viewpoint, but with randomness taking
certain microscopic effects into account, \emph{mesoscopic} stochastic
models attempt to strike a balance between computational feasibility
and accuracy. In fact, a common theme in several studies is the
discrepancy between deterministic and stochastic descriptions
\cite{circadian,noisygeneregul,newsteadystates_RDME}.

Due to the presence of multiple scales in species abundance and in
reaction rates, the computational problem of simulating well-stirred
or spatially extended models has caught a lot of attention. For
example, these features are the driving motivation behind the
development of \emph{hybrid methods}
\cite{haseltine_HSSA,kurtz_multiscale} and the various kinds of
\emph{model reduction} techniques that have been proposed \cite{QSSA2,
  smalltimesteps, nestedSSA, ssa_parareal}. Similarly, more efficient
time discretization ``tau-leap'' methods were proposed early on
\cite{tau_leap}, and has since then been modified and analyzed in
various ways \cite{postleap, tau_li, tau_leap_anal}.

As a means to facilitate multiscale- and multiphysics coupling in
method's development in general, \emph{split-step} methods have a long
story. Originally developed via (finite-dimensional) operator
splitting \cite{Strang}, in the present case these methods became
particularly important in the more computationally demanding spatial
stochastic reaction-diffusion setting \cite{master_spatial}. An
analysis in the sense of convergence in distribution in the master
equation setting was presented in \cite{jahnke_split}. A practical
method based on splitting to simulate fractional diffusion was
reported in \cite{fracdiff_split}, and an adaptive reaction-diffusion
simulator was suggested in \cite{hellander_split}. Finally, in
\cite{parallel_LKMC}, the splitting technique was used to bring out
parallelism from an otherwise strictly serial description.

In this work we look at the strong path-wise convergence of the basic
first order (in the operator sense) split-step method. A key issue
here is to devise a meaningful coupling between different trajectories
conditioned on different split-step discretizations. We solve this by
using a partition of unity representation which as a by-product also
implies a practical algorithm. Sufficient conditions for strong
convergence of order $1/2$ that apply notably also to \emph{open}
chemical systems are described and this is also confirmed in our
numerical experiments. An interesting observation is that, although
still only formally of strong order $1/2$, the second order (again in
the operator sense) Strang splitting performs considerably better than
the first order splitting.

In \S\ref{sec:jsdes} we recapitulate the description of chemical
processes as continuous-time Markov chains, in the non-spatial as well
as in the spatially extended case. Our main theoretical findings,
including explicit conditions for strong convergence, are reported in
\S\ref{sec:analysis}. Numerical illustrations are presented in
\S\ref{sec:experiments}, and a concluding discussion is found in
\S\ref{sec:conclusions}.


\section{Stochastic jump kinetics}
\label{sec:jsdes}

We summarize in this section the mathematical background required in
the description of biochemical processes. A recapitulation of the
traditional well-stirred setting is found in
\S\ref{subsec:network}. Path-wise descriptions are found in
\S\ref{subsec:paths}, where some fundamental tools from stochastic
analysis are also reviewed. Finally, in \S\ref{subsec:spatiality} the
required extensions to encompass also spatially extended models are
indicated.

\subsection{Well-stirred Markovian reactions}
\label{subsec:network}

In a memory-less Markovian chemical system, at any instant $t$, the
\emph{state} is an integer vector $X(t) \in \Intdom_{+}^{D}$ counting
the number of molecules of each of $D$ species. The \emph{reactions}
are prescribed transitions of the state according to an intensity law,
or \emph{reaction propensity};
\begin{align}
  \label{eq:prop0}
  w_{r}: \Intdom_{+}^{D} \to \Realdom_{+}, \\
  \label{eq:prop}
  \Prob\left[X(t+dt) = x-\stoich_{r}| \; X(t) = x\right] &=
  w_{r}(x) \, dt+o(dt).
\end{align}
The system is thus fully described by the pair $[\stoich,w(x)]$, that
is, the \emph{stoichiometric matrix} $\stoich \in \Intdom^{D \times
  R}$, and $w(x) \equiv [w_{1}(x), \ldots, w_{R}(x)]^{T}$, the column
vector of reaction propensities. An important remark is that useful
physical descriptions are always \emph{conservative}, for all
propensities it holds that $w_{r}(x) = 0$ whenever $x-\stoich_{r} \not
\in \Intdom_{+}^{D}$ \cite[Chap.~8.2.2, Definition~2.4]{BremaudMC}.

The \emph{chemical master equation} (CME) \cite[Chap.~V]{VanKampen},
or \emph{Kolmogorov's forward differential system}
\cite[Chap.~8.3]{BremaudMC} governs the law of the state $X(t)$
conditioned on some initial state. Put $p(x,t) = \Prob(X(t) = x | \;
X(0) = x_{0})$. Then
\begin{align}
  \label{eq:CME}
  \frac{\partial p(x,t)}{\partial t} &= 
  \sum_{r = 1}^{R} w_{r}(x+\stoich_{r})p(x+\stoich_{r},t)-
  w_{r}(x)p(x,t) =: \Master p(x,t),
\end{align}
where $\Masteradj$ is the \emph{infinitesimal generator} of the
process.

\subsection{Path-wise representations}
\label{subsec:paths}

The use of path-wise representations in the analysis of Markov
processes on discrete state-spaces in continuous time was pioneered by
Kurtz in a series of paper (see \cite{Markovappr} and the references
therein). We thus postulate the probability space
$(\Probspace,\Probfiltr,\Prob)$, where the filtration $\Probfiltr_{t
  \ge 0}$ contains $R$-dimensional Poisson processes. The transition
law \eqref{eq:prop} implies a certain counting process which can be
constructed from a standard unit-rate Poisson process $\Pi_{r}$. The
state $X(t)$ can then be written
\begin{align}
  \label{eq:Poissrepr}
  X_{t} &= X_{0}-\sum_{r = 1}^{R} \stoich_{r} \Pi_{r}
  \left(  \int_{0}^{t} w_{r}(X_{s-}) \, ds \right).
\end{align}
This is Kurtz's \emph{random time change representation}
\cite[Chap.~6.2]{Markovappr} which gives rise to the notion of
\emph{operational time} in the argument to each of the $R$ independent
Poisson processes. Note that, in \eqref{eq:Poissrepr}, by $X(t-)$ is
meant the state before any transitions at time $t$.

It is sometimes convenient to use an equivalent construction in terms
of a random counting measure \cite[Chap.~VIII]{pointPQ}. We denote by
$\mu_{r}(dt) = \mu_{r}(w_{r}(X_{t-}) \, dt; \, \Probelem)$ for
$\Probelem \in \Probspace$ the random measure associated with the
counting process whose intensity at any instant $t$ is
$w_r(X_{t-})$. Thus with deterministic intensity $\Expect[\mu_{r}(dt)]
= \Expect[w_{r}(X_{t-}) \, dt]$, this defines an increasing sequence
of exponentially distributed counts $\tau_{i} \in \Realdom_{+}$. With
$\fatmu = [\mu_{1},\ldots,\mu_{R}]^{T}$ we can now write
\eqref{eq:Poissrepr} in the compact differential form
\begin{align}
  \label{eq:SDE}
  dX_{t} &= -\stoich \fatmu(dt).
\end{align}

For realistic chemical systems the number of molecules must somehow be
bound \textit{a priori}. We encapsulate this property by requiring the
existence of a certain weighted norm
\begin{align}
  \label{eq:ldef}
  \lnorm{x} := \lvect x, \quad x \in \Intdom_{+}^{D},
\end{align}
normalized such that $\min_{i} \lvec_{i} = 1$. Equipped with this norm
we formulate, following \cite{jsdestab} closely (see also
\cite{ergodic_jsde, momentbound_jsde}),

\begin{assumption}
  \label{ass:ass}
  For arguments $x$, $y \in \Intdom_{+}^{D}$ we assume that
  \begin{enumerate}[(i)]
  \item \label{it:1bnd} $-\lvect \stoich w(x) \le A+\alpha \lnorm{x}$,
   
  \item \label{it:bnd} $(-\lvect \stoich)^{2} w(x)/2 \le B+\beta_{1}
    \lnorm{x}+\beta_{2}\lnorm{x}^{2}$,

  \item
    \label{it:lip} $|w_{r}(x)-w_{r}(y)| \le L_{r}(\stopping)\|x-y\|$, for $r =
    1,\ldots,R$, and $\lnorm{x} \vee \lnorm{y} \le \stopping$.
  \end{enumerate}
  With the exception of $\alpha$, all parameters
  $\{A,B,\beta_{1},\beta_{2},L\}$ are assumed to be non-negative.
\end{assumption}

In order to state an \textit{a priori} result concerning the
regularity of the solutions to \eqref{eq:SDE}, following
\cite[Sect.~3.1.2]{jumpSDEs}, we define the following family of spaces
of path-wise locally bounded processes:
\begin{align}
  \Sspace{p} &= \left\{ X(t,\Probelem): \begin{array}{l}
    X_{t} \in \Intdom_{+}^{D} \mbox{ is
      $\Probfiltr_{t}$-adapted such that } \\
    \Expect \, \sup_{t \in [0,T]} \lnorm{X_{t}}^{p} <
    \infty \mbox{ for } \forall T < \infty \end{array} \right\}.
\end{align}

\begin{theorem}[\textit{Theorem~4.7 in \cite{jsdestab}}]
  \label{th:exist0}
  Let $X_{t}$ be a solution to \eqref{eq:SDE} under
  Assumption~\ref{ass:ass}~\eqref{it:1bnd} and \eqref{it:bnd} with
  $\beta_{2} = 0$. Then if $\lnorm{X_{0}}^{p} < \infty$, $\{X_{t}\}_{t
    \ge 0} \in \Sspace{p}$. If $\beta_{2} > 0$ then the conclusion
  remains under the additional requirement that $\lnorm{X_{0}}^{p+1} <
  \infty$.
\end{theorem}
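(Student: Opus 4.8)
The plan is to establish the moment bound by deriving a differential inequality for $\Expect \lnorm{X_t}^p$ and closing it with a Gronwall-type argument, then upgrading the pointwise-in-$t$ bound to the supremum-over-$[0,T]$ bound via a martingale maximal inequality. The natural device is It\^o's formula for jump processes applied to the function $\phi(x) = \lnorm{x}^p = (\lvect x)^p$. Writing $dX_t = -\stoich\,\fatmu(dt)$ and splitting the random measure into its compensator $w_r(X_{t-})\,dt$ and the compensated martingale measure $\tilde\mu_r(dt) = \mu_r(dt) - w_r(X_{t-})\,dt$, the jump of $\lnorm{X}$ across a firing of reaction $r$ is exactly $-\lvect\stoich_r$, so I would expand
\begin{align}
  \lnorm{X_t}^p = \lnorm{X_0}^p
  + \sum_{r=1}^{R}\int_0^t
  \left[\left(\lnorm{X_{s-}}-\lvect\stoich_r\right)^p
    -\lnorm{X_{s-}}^p\right]\mu_r(ds).
  \nonumber
\end{align}

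First I would take expectations, replacing $\mu_r(ds)$ by its compensator $w_r(X_{s-})\,ds$ (the compensated part being a local martingale with zero expectation, modulo a localization step). The key is to bound the bracketed increment. A Taylor/convexity expansion of $(\lnorm{x}-\lvect\stoich_r)^p - \lnorm{x}^p$ produces a leading term proportional to $-p\,(\lvect\stoich_r)\lnorm{x}^{p-1}$ together with a second-order remainder controlled by $(\lvect\stoich_r)^2\lnorm{x}^{p-2}$. Summing against $w_r$ and invoking Assumption~\ref{ass:ass}~\eqref{it:1bnd} on the first-order term and \eqref{it:bnd} on the second-order term, the linear-in-$\lnorm{x}$ growth of hypothesis \eqref{it:1bnd} yields, after collecting powers, a bound of the shape $\frac{d}{dt}\Expect\lnorm{X_t}^p \le C_1 + C_2\,\Expect\lnorm{X_t}^p$ when $\beta_2=0$. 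Gronwall's lemma then gives $\sup_{t\in[0,T]}\Expect\lnorm{X_t}^p < \infty$. When $\beta_2>0$, the quadratic term $\beta_2\lnorm{x}^2$ in \eqref{it:bnd} feeds a contribution of order $\lnorm{x}^{p+1}$ into the second-order remainder, which is why one extra moment $\lnorm{X_0}^{p+1}<\infty$ must be assumed to close the hierarchy.

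The main obstacle is passing from the bound on $\Expect\lnorm{X_t}^p$ (pointwise in $t$) to the required bound on $\Expect\sup_{t\in[0,T]}\lnorm{X_t}^p$. For this I would return to the representation above, bound the drift (compensator) part by its supremum, which is controlled by $\int_0^T(1+\lnorm{X_s}^p)\,ds$ and hence integrable by the first step, and handle the martingale part using a Burkholder--Davis--Gundy or Doob maximal inequality applied to the compensated integral. The jump structure makes the quadratic variation a sum over firings of $(\lvect\stoich_r)^2$-type increments, and controlling its expectation again invokes Assumption~\ref{ass:ass}~\eqref{it:bnd}. The localization needed to justify the martingale step --- stopping at the first exit from a large $\lvec$-ball and removing the stopping time by monotone convergence once the uniform bound is in hand --- is the delicate bookkeeping. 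Since this is precisely Theorem~4.7 of \cite{jsdestab}, I would in practice cite that argument rather than reproduce it; the sketch above indicates how the two growth conditions and the distinction $\beta_2=0$ versus $\beta_2>0$ enter, and why the latter costs one additional initial moment.
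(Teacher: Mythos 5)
The paper never proves this statement itself: it is imported verbatim as Theorem~4.7 of \cite{jsdestab}. The fair comparison is therefore with the proofs the paper \emph{does} give for the split-step analogues, namely Theorem~\ref{th:Epbound} (pointwise moments) and Theorem~\ref{th:exist} (the $\Sspace{p}$ membership), built on Lemma~\ref{lem:diffbound} and Lemma~\ref{lem:quadratic_variation}. Your strategy coincides with that argument step for step: Dynkin's formula for $f(x)=\lnorm{x}^{p}$ under the stopping time \eqref{eq:stopping}, Taylor-type increment bounds, Assumption~\ref{ass:ass}~\eqref{it:1bnd} on the first-order term and \eqref{it:bnd} on the remainder, Gr\"onwall plus Fatou to remove the localization, and then Burkholder's inequality with a quadratic-variation estimate for the supremum. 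So the route is the intended one.

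There is, however, one substantive misattribution: you claim that when $\beta_{2}>0$ the term $\beta_{2}\lnorm{x}^{2}$ injects a contribution of order $\lnorm{x}^{p+1}$ into the second-order remainder of the \emph{drift}, and that this is why the extra initial moment is needed. That is not where it happens. In the Gr\"onwall step the second-order remainder carries $\lnorm{x}^{p-2}$ (cf.~\eqref{eq:signed_diffbound}), so the $\beta_{2}$ term contributes $\beta_{2}\lnorm{x}^{2}\cdot\lnorm{x}^{p-2}=\beta_{2}\lnorm{x}^{p}$: the differential inequality closes at order $p$ with no extra hypothesis, which is exactly why Theorem~\ref{th:Epbound} holds with only $\lnorm{Y_{0}}^{p}<\infty$ irrespective of $\beta_{2}$. (Had the drift genuinely contained $\Expect\lnorm{X_{t}}^{p+1}$, one additional moment could not ``close the hierarchy'': the $p$-th moment would call on the $(p+1)$-st, that one on the $(p+2)$-nd, and so on indefinitely.) The $(p+1)$-st moment is consumed only in the supremum step: the quadratic-variation bound is built from increments that are \emph{first} order in the jump (cf.~\eqref{eq:abs_diffbound}), so pairing $w_{r}(x)\,|\lvect\stoich_{r}|$ with Assumption~\ref{ass:ass}~\eqref{it:bnd} costs a factor $\lnorm{x}$ and produces $\beta_{2}\lnorm{x}^{2}\cdot\lnorm{x}^{p-1}=\beta_{2}\lnorm{x}^{p+1}$ --- precisely the term $\beta_{2}^{(0)}\lnorm{Y_{s}}^{p+1}$ in \eqref{eq:quadratic_variation} --- and bounding its expectation through the pointwise moment theorem is what requires $\lnorm{X_{0}}^{p+1}<\infty$. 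With that attribution corrected, your sketch is the standard argument.
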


Below we will frequently use the stopping time
\begin{align}
  \label{eq:stopping}
  \taustopping := \inf_{t \ge 0}\{\lnorm{X_{t}} > \stopping\}
\end{align}
and put $\stop{t} = t \wedge \taustopping$ for some finite $t$
defining an interval of interest. As an example, a differential form
of It\^o's change of variables formula can be derived formally by
simply summing over jump times \cite[Chap.~4.4.2]{LevySDEs}
\begin{align}
  \label{eq:Ito}
  df(X_{t}) &= \sum_{r = 1}^{R} f(X_{t-}-\stoich_{r})-
  f(X_{t-}) \, \mu_{r}(dt).
\end{align}
More carefully, \emph{Dynkin's formula} for the stopped process is
then given by \cite[Chap.~9.2.2]{BremaudMC},
\begin{align}
  \label{eq:Dynkin}
  \Expect f(X_{\stop{t}})-\Expect f(X_{0}) &= 
  \int_{0}^{\stop{t}} \sum_{r = 1}^{R} \Expect \left[ (f(X_{s}-\stoich_{r})-
  f(X_{s})) w_{r}(X_{s}) \right] \, ds.
\end{align}

In order to efficiently work with the Poisson representation
\eqref{eq:Poissrepr} in the sense of mean square, the following two
lemmas which follows \cite[\textit{Manuscript}]{jsdevarsplit} very
closely will be critical.
\begin{lemma}
  \label{lem:PiStop}
  Let $\Pi$ be a unit-rate Poisson process and $T$ a bounded stopping
  time, both adapted to $\Probfiltr_{t}$. Then
  \begin{align}
    \label{eq:PiStop1}
    \Expect[\Pi(T)] &= \Expect[T], \\
    \label{eq:PiStop2}
    \Expect[\Pi^2(T)] &= 2\Expect[\Pi(T)T]-\Expect[T^{2}]+\Expect[T].
  \end{align}
\end{lemma}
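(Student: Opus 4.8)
The plan is to establish both identities using the martingale structure of the compensated Poisson process together with the optional stopping theorem. Let me set up the key object: since $\Pi$ is a unit-rate Poisson process adapted to $\Probfiltr_t$, the compensated process $M_t := \Pi(t) - t$ is a martingale. For the first identity, I would apply the optional stopping theorem to $M_t$ at the bounded stopping time $T$, which gives $\Expect[M_T] = \Expect[M_0] = 0$, hence $\Expect[\Pi(T)] = \Expect[T]$. The boundedness of $T$ is exactly what guarantees the hypotheses of optional stopping are met, so this part should be routine.

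**For the second identity**, the natural approach is to find a martingale whose value involves $\Pi^2$. I would use that $\Pi(t)$ is a Poisson process with the well-known property that $\Pi^2(t) - \Pi(t) - 2\int_0^t \Pi(s-)\,ds$... — more cleanly, I would work from the quadratic variation. The compensated martingale $M_t = \Pi(t) - t$ has predictable quadratic variation $\langle M \rangle_t = t$, and its square minus its angle bracket, $M_t^2 - t$, is a martingale. Expanding $M_t^2 = \Pi^2(t) - 2t\,\Pi(t) + t^2$, the martingale property at the stopped time $T$ yields
\begin{align}
  \Expect\bigl[\Pi^2(T) - 2T\,\Pi(T) + T^2\bigr] &= \Expect[T],
\end{align}
which rearranges directly into \eqref{eq:PiStop2}. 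This is the cleanest route and avoids integrating the predictable projection explicitly.

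The one subtlety worth checking carefully — and **the step I expect to be the main obstacle** — is the justification that optional stopping applies to the square-type martingale $M_t^2 - \langle M \rangle_t$ at $T$, since square-integrability of $\Pi(T)$ and $T$ under mere boundedness of $T$ is not entirely automatic and must be argued. Because $T$ is a \emph{bounded} stopping time, say $T \le c$ almost surely, I would dominate $\Pi(T) \le \Pi(c)$ pathwise (monotonicity of $\Pi$), and $\Pi(c)$ has finite moments of all orders as an ordinary Poisson variable; this dominates the relevant terms and secures uniform integrability, licensing the interchange. With that domination in hand, the cross term $\Expect[T\,\Pi(T)]$ is finite and the rearrangement is valid. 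Thus both \eqref{eq:PiStop1} and \eqref{eq:PiStop2} follow from optional stopping applied to $M_t$ and to $M_t^2 - t$ respectively, with boundedness of $T$ supplying the required integrability throughout.
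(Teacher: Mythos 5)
Your proposal is correct and follows essentially the same route as the paper's proof: optional stopping applied to the compensated martingale $\tilde{\Pi}(t) = \Pi(t)-t$ for the first identity, and to the martingale $\tilde{\Pi}^2(t)-t$ (expanded and rearranged) for the second. Your additional care over integrability, dominating $\Pi(T)$ by $\Pi(c)$ for $T \le c$, is a detail the paper delegates to its citations of Protter's theorems but is entirely consistent with them.
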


\begin{proof}
  Let $\tilde{\Pi}(t) := \Pi(t)-t$ be the compensated process. This is
  a martingale and Doob's optional sampling theorem implies
  $\Expect[\tilde{\Pi}(T)] = 0$ \cite[Theorem 17,
    Chap.~I.2]{protterSDE}, which is \eqref{eq:PiStop1}. Similarly
  $Z(t) := \tilde{\Pi}^2(t)-t$ is a martingale \cite[Theorem 24,
    Chap.~I.3]{protterSDE} and the sampling theorem yields
  $\Expect[Z(T)] = 0$, or,
  \begin{align*}
    0 &= \Expect[\Pi^2(T)-2\Pi(T)T+T^{2}-T],
  \end{align*}
  which is \eqref{eq:PiStop2}.
\end{proof}

\begin{lemma}
  \label{lem:Stopping}
  Let $\Pi$ be a unit-rate Poisson process and $T_1$, $T_2$ bounded
  stopping times, all adapted to $\Probfiltr_{t}$. Then
  \begin{align}
    \label{eq:Stopping1}
    \Expect[|\Pi(T_2)-\Pi(T_1)|] &= \Expect[|T_2-T_1|], \\
    \label{eq:Stopping2}
    \Expect[(\Pi(T_2)-\Pi(T_1))^2] &=
    2\Expect[|\Pi(T_2)-\Pi(T_1)|(T_1 \vee T_2)] \\
    \nonumber
    &\hphantom{=} -\Expect[|T_2^{2}-T_1^{2}|]+\Expect[|T_2-T_1]].
  \end{align}
\end{lemma}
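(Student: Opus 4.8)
The plan is to reduce Lemma~\ref{lem:Stopping} to Lemma~\ref{lem:PiStop} by conditioning on which stopping time is larger, so that the increment of $\Pi$ between $T_1$ and $T_2$ becomes an increment over a single ordered interval. The key structural observation is that for a Poisson process the increment $\Pi(T_2)-\Pi(T_1)$ is nonnegative precisely when $T_2 \ge T_1$, so $|\Pi(T_2)-\Pi(T_1)| = \Pi(T_1 \vee T_2)-\Pi(T_1 \wedge T_2)$ holds pathwise. This lets me replace the absolute value by a genuine (signed, but nonnegative) increment over the interval $[T_1 \wedge T_2, \, T_1 \vee T_2]$.

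First I would establish \eqref{eq:Stopping1}. Writing $S := T_1 \wedge T_2$ and $U := T_1 \vee T_2$, both are bounded stopping times adapted to $\Probfiltr_t$, and $\Pi(U)-\Pi(S) = |\Pi(T_2)-\Pi(T_1)|$ pathwise by monotonicity of $\Pi$. The compensated process $\tilde{\Pi}(t) = \Pi(t)-t$ is a martingale, and I would apply Doob's optional sampling theorem at the two bounded stopping times $S \le U$, or equivalently invoke \eqref{eq:PiStop1} in the form $\Expect[\Pi(U)-U] = \Expect[\Pi(S)-S] = 0$. Subtracting gives $\Expect[\Pi(U)-\Pi(S)] = \Expect[U-S] = \Expect[U \vee S - S \wedge U] = \Expect[|T_2-T_1|]$, which is \eqref{eq:Stopping1}.

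For \eqref{eq:Stopping2} the idea is identical but applied to the second-moment martingale. Since $(\Pi(T_2)-\Pi(T_1))^2 = (\Pi(U)-\Pi(S))^2$, I would expand and use the martingale $Z(t) = \tilde{\Pi}^2(t)-t$ from Lemma~\ref{lem:PiStop}, applying the optional sampling theorem at $S$ and at $U$. The natural route is to compute $\Expect[(\Pi(U)-\Pi(S))^2]$ by writing it as $\Expect[\Pi^2(U)] - 2\Expect[\Pi(U)\Pi(S)] + \Expect[\Pi^2(S)]$ and reducing each term via \eqref{eq:PiStop2}; the cross term requires care, and I would handle it by conditioning on $\Probfiltr_S$ and using that, given $\Probfiltr_S$, the increment $\Pi(U)-\Pi(S)$ is the increment of a unit-rate Poisson process over the stopping time $U-S$. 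This conditioning collapses $\Expect[\Pi(U)\Pi(S)]$ appropriately and, after collecting terms, should produce $2\Expect[|\Pi(T_2)-\Pi(T_1)|\,(T_1 \vee T_2)] - \Expect[|T_2^2-T_1^2|] + \Expect[|T_2-T_1|]$, noting that $U^2-S^2 = |T_2^2-T_1^2|$ and $U-S = |T_2-T_1|$.

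The main obstacle I anticipate is the cross term $\Expect[\Pi(U)\Pi(S)]$ and, more generally, making the conditional argument rigorous: I must verify that the strong Markov property can be applied at the stopping time $S$ so that $\Pi(S+\cdot)-\Pi(S)$ is again a unit-rate Poisson process independent of $\Probfiltr_S$, and that $U-S$ is a bounded stopping time relative to the shifted filtration. Once that is justified, the second moment of the increment over $U-S$ is governed by \eqref{eq:PiStop2} applied to this shifted process, and the term $2\Expect[(\Pi(U)-\Pi(S))\,U]$ emerges from the $2\Expect[\Pi(T)T]$ contribution with $U$ playing the role of the upper endpoint; matching this against the stated right-hand side is then bookkeeping rather than a genuine difficulty.
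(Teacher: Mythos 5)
Your reduction to the ordered pair $S := T_1 \wedge T_2 \le U := T_1 \vee T_2$, the pathwise identity $|\Pi(T_2)-\Pi(T_1)| = \Pi(U)-\Pi(S)$ by monotonicity, and the proof of \eqref{eq:Stopping1} via optional sampling are exactly the paper's argument, and that part is complete. The structure you propose for \eqref{eq:Stopping2} --- expand the square, treat the diagonal terms with Lemma~\ref{lem:PiStop}~\eqref{eq:PiStop2}, and handle the cross term by conditioning on $\Probfiltr_{S}$ --- is also the paper's structure.

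The gap is in how you close the argument with the shifted process. Applying \eqref{eq:PiStop2} to $\hat{\Pi}(u) := \Pi(S+u)-\Pi(S)$ at the stopping time $U-S$ produces
\begin{align*}
  \Expect[(\Pi(U)-\Pi(S))^2] &=
  2\Expect[(\Pi(U)-\Pi(S))(U-S)]-\Expect[(U-S)^{2}]+\Expect[U-S],
\end{align*}
and \emph{not} what you claim, namely the lemma's right-hand side
$2\Expect[(\Pi(U)-\Pi(S))U]-\Expect[U^{2}-S^{2}]+\Expect[U-S]$: the multiplier is $U-S$, not $U$, and the quadratic term is $(U-S)^2$, not $U^2-S^2$. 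The two expressions are in fact equal, but their difference is
\begin{align*}
  2\Expect[S(\Pi(U)-\Pi(S)-(U-S))]
  &= 2\Expect[S\,\Expect[\tilde{\Pi}(U)-\tilde{\Pi}(S)\,|\,\Probfiltr_{S}]] = 0,
\end{align*}
which is one further application of conditional optional sampling. So the identification you dismiss as ``bookkeeping'' is precisely the step that does the remaining work, and it must be made explicit. Note also that the strong Markov property you invoke (plus the fact that $U-S$ is a stopping time for the shifted filtration $\Probfiltr_{S+u}$) is a heavier tool than needed and itself requires justification relative to the ambient filtration $\Probfiltr_{t}$, which here is larger than the natural filtration of $\Pi$. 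The paper avoids both issues: it never shifts the process, but computes the cross term $\Expect[\Pi(S)\Pi(U)] = \Expect[\Pi(S)\Expect[\Pi(U)|\Probfiltr_{S}]]$ from the conditional form of Doob's theorem for the compensated martingale, $\Expect[\tilde{\Pi}(U)|\Probfiltr_{S}] = \tilde{\Pi}(S)$, i.e.\ $\Expect[\Pi(U)|\Probfiltr_{S}] = \Pi(S)-S+\Expect[U|\Probfiltr_{S}]$, and then applies \eqref{eq:PiStop2} twice, to $\Pi(S)$ and $\Pi(U)$ directly. If you replace your strong Markov step by this martingale identity, your expansion closes without any reconciliation step and coincides with the paper's proof.
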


The formulation \eqref{eq:Stopping1} was recently used in
\cite[\textit{Manuscript}]{AGangulyMMS2014} to provide for a related
analysis in the sense of convergence in mean.

\begin{proof}
  Assume first that $T_2 \ge T_1$. By
  Lemma~\ref{lem:PiStop}~\eqref{eq:PiStop1},
  \begin{align*}
    \Expect[\Pi(T_2)-\Pi(T_1)] &= \Expect[T_2-T_1].
  \end{align*}
  For general stopping times $S_1$, $S_2$, say, \eqref{eq:Stopping1}
  follows upon substituting $T_1 := S_1 \wedge S_2$ and $T_2 := S_1
  \vee S_2$ into this equality.

  Next put $X := \Expect[(\Pi(T_2)-\Pi(T_1))^2]$ and assume anew that
  $T_2 \ge T_1$. We find
  \begin{align*}
    X &= \Expect[\Pi(T_2)^2 +
        \Pi(T_1)^2 - 2\Pi(T_1)\Pi(T_2)] \\
      &= \Expect[\Pi(T_2)^2 + \Pi(T_1)^2] -
        2\Expect[\Pi(T_1)\Expect[\Pi(T_2)|\Probfiltr_{T_1}]]. \\
    \intertext{To evaluate the iterated expectation, note that}
    \Expect[\tilde{\Pi}(T_2)|\Probfiltr_{T_1}] &= \tilde{\Pi}(T_1)
    \Longrightarrow
    \Expect[\Pi(T_2)|\Probfiltr_{T_1}] =
    \Pi(T_1)-T_1+E[T_2|\Probfiltr_{T_1}]. \\
    \intertext{Hence}
    \Expect[\Pi(T_1)\Expect[\Pi(T_2)|\Probfiltr_{T_1}]] &= 
    \Expect[\Pi(T_1)^2]-
    \Expect[\Pi(T_1)T_1]+\Expect[\Pi(T_1)T_2], \\
    \intertext{and so}
    X &= \Expect[\Pi(T_2)^2 - \Pi(T_1)^2]
        +2\Expect[\Pi(T_1)T_1]-2\Expect[\Pi(T_1)T_2]. \\
    \intertext{Applying Lemma~\ref{lem:PiStop}~\eqref{eq:PiStop2}
      twice then yields}
    X &= 2\Expect[(\Pi(T_2)-\Pi(T_1))T_2]-\Expect[T_2^{2}-T_1^{2}]+
    \Expect[T_2-T_1].
  \end{align*}
  As before the substitutions $T_1 := S_1 \wedge S_2$ and $T_2 := S_1
  \vee S_2$ implies \eqref{eq:Stopping2} for general stopping times
  $S_1$, $S_2$.
\end{proof}

Lemma~\ref{lem:Stopping} will be applied as follows. Assuming first
the \textit{a priori} bound $T_1 \vee T_2 \le B$ we get from
\eqref{eq:Stopping1}--\eqref{eq:Stopping2} that
\begin{align}
  \label{eq:finalStop1}
  \Expect[(\Pi(T_2)-\Pi(T_1))^2] &\le (2B+1)\Expect[|T_2-T_1|].
\end{align}
Let $\Probfiltr_t$ be the filtration adapted to
$\{\tilde{\Pi}_r\}_{r = 1}^R$. Then for any fixed time $t$, $T_r(t)
:= \int_{0}^{t} w_r(X(s)) \, ds$ is a stopping time adapted to
\cite[Lemma~3.1]{tauAnderson}
\begin{align*}
  \tilde{\Probfiltr}^r_u := \sigma\{\Pi_r(s), s \in [0,u];\;
  \Pi_{k \not = r}(s), s\in [0,\infty]\}.
\end{align*}
Intuitively, since $X(t) = \sum_r \Pi_r(T_r(t)) \stoich_r$, the event
$\{T_r(t) < u\}$ depends on $\Pi_r$ during $[0,u]$ and on $\{\Pi_k, k
= 1\ldots R,\,k\neq r\}$ during $[0,\infty)$. However, since
  $\{\Pi_r\}_{r = 1}^R$ are all independent, $\tilde{\Pi}_r$ is still
  a martingale with respect to $\tilde{\Probfiltr}^r_u$ (and not only
  with respect to $\Probfiltr^r_u = \sigma\{\Pi_r(s), s \in [0,u]
  \}$). Hence we can apply the stopping time theorems to $T_r(t)$ and
  the previous lemmas apply.

For an approximating process $\tilde{X} \approx X$, say, assuming a
suitable random time representation in the form of
\eqref{eq:Poissrepr} is available, these results will remain valid for
$\tilde{X}$ as well. To conclude, given the bound
\begin{align}
  \int_{0}^{t}w_r(X(s)) \, ds \vee 
  \int_{0}^{t}\tilde{w}_r(\tilde{X}(s)) \, ds \le B.
\end{align}
we get from \eqref{eq:finalStop1} that
\begin{align}
  \nonumber
  \Expect\left[\left(\Pi_r\left(\int_{0}^{t}w_r(X(s)) \, ds\right)-
    \Pi_r\left(\int_{0}^{t}\tilde{w}_r(\tilde{X}(s)) \, ds \right)
    \right)^2\right] \\
  \label{eq:finalStop}
  \le
  (2B+1)\Expect\left[\left|\int_{0}^{t}w_r(X(s)) \, ds-
    \int_{0}^{t}\tilde{w}_r(\tilde{X}(s)) \, ds\right|\right].
\end{align}

\subsection{Incorporating spatial dependence}
\label{subsec:spatiality}

Well-stirred modeling of chemical kinetics relies on
\emph{homogeneity}, that is, that the probability of finding a
molecule is equal throughout the volume. There are many situations of
interest where this assumption is violated, for instance, when slow
molecular transport allows concentration gradients to build up. A way
to approach this situation is through compartmentalization techniques
\cite[Chap.~XIV]{VanKampen}, which leads to models with a very large
number of generalized reaction channels. Since split-step methods are
a particularly promising computational technique here, we briefly
review this framework.

The basic premise is that, although the full volume $\Vol$ is not
well-stirred, it can be subdivided into smaller cells $\Vol_{j}$ such
that their individual volume $|\Vol_{j}|$ is small enough that
diffusion suffices to make each cell practically well-stirred.

The state of the system is thus now an array $\X \in \Intdom_{+}^{D
  \times K}$ consisting of $D$ chemically active species $\X_{ij}$, $i
= 1,\ldots,D$, in $K$ cells, $j = 1,\ldots,K$. This state is changed
by chemical reactions occurring in each cell (vertically in $\X$) and
by diffusion where molecules move to adjacent cells (horizontally in
$\X$).

Since each cell is assumed to be well-stirred, \eqref{eq:SDE} governs
the reaction dynamics,
\begin{align}
  \label{eq:SDEr}
  d\X_{t} &= -\stoich \fatmu(dt),
\end{align}
where $\fatmu$ is now $R$-by-$K$ with $\Expect[\mu_{rj}] =
\Expect[w_{rj}(\X_{(\cdot,j)}(t-)) \, dt]$. Transport of a molecule
from $\Vol_{k}$ to $\Vol_{j}$ can also be thought of as a special kind
of reaction,
\begin{align}
  \label{eq:dprop}
  \X_{ik} &\xrightarrow{q_{kji} \X_{ik}} \X_{ij},
\end{align}
where the rate constant $q_{kji}$ is non-zero only for connected cells.
%
%
In practice, for any given spatial discretization, numerical methods
may be used to define the diffusion rates consistently
\cite{master_spatial}. We obtain from \eqref{eq:dprop} the mesoscopic
diffusion model
\begin{align}
  \label{eq:SDEd}
  d\X_{t} &= \stoichd \otimes (-\fatnu^{T}+\fatnu)(dt),
\end{align}
where $\stoichd \in \Intdom^{1 \times K}$ of all 1's, where $\fatnu$
is $K$-by-$K$-by-$D$ with $\Expect[\nu_{kji}] = \Expect[q_{kji}
  \X_{ik}(t-) \, dt]$, and where the array operations are suitably
defined. In \eqref{eq:SDEd}, note how diffusion \emph{exit} events are
paired with \emph{entry} events via the terms $-\fatnu^{T}$ and
$\fatnu$, respectively.

By superposition of \eqref{eq:SDEr} and \eqref{eq:SDEd} we arrive at
the reaction-diffusion model
\begin{align}
  \label{eq:RDME}
  d\X_{t} &= -\stoich \fatmu(dt)+\stoichd \otimes (-\fatnu^{T}+\fatnu)(dt).
\end{align}
As was already noted in \cite{master_spatial}, part of the interest in
split-step methods comes from simulating reactions and diffusions by
different methods. For simplicity, in the rest of the paper we shall
take the well-stirred case \eqref{eq:SDE} as our target of study. In
doing so we keep in mind that the reaction-diffusion (or
reaction-transport) case \eqref{eq:RDME} does fall under the same
general class of descriptions.


\section{Analysis of split-step methods}
\label{sec:analysis}

In this section we present our main theoretical findings. The
splitting we choose to analyze is defined in the master equation
setting in \S\ref{subsec:cme_split}. In order to couple trajectories
and obtain path-wise comparisons, the splitting is redefined in the
operational time framework in \S\ref{subsec:optime_split}, where some
\textit{a priori} estimates are also derived. After developing a few
further preliminary results in \S\ref{subsec:lemmas}, the theory is
put together in \S\ref{subsec:convergence}, where our main convergence
result is presented.

Throughout this section we let $C$ denote a positive constant which
may be different at each occurrence.

\subsection{Operator splitting and the master equation}
\label{subsec:cme_split}

While we take another approach below, traditionally, split-step
methods are constructed via operator-splitting of the master equation
\eqref{eq:CME}. Assume the split into two sets of reaction pathways
can be written as
\begin{align}
  \label{eq:split1}
  \stoich &= \left[ \stoich^{(1)} \; \stoich^{(2)} \right], \\
  \label{eq:split2}
  w(x) &= \left[ w^{(1)}(x); \; w^{(2)}(x) \right],
\end{align}
where $\stoich^{(i)}$ is $D$-by-$R_{i}$, $i \in \{1,2\}$, $R_{1}+R_{2}
= R$, and where the propensity column vectors have the corresponding
dimensions. The simplest possible split-step method, and the one we
choose to analyze in this paper, can then be written in integral form
as (compare \eqref{eq:CME})
\begin{align}
  \label{eq:CMEsplit1}
  \tilde{p}_h(x,t+h) &= p_h(x,t)+\int_{t}^{t+h} 
  \sum_{r \in \R_{1}} w_{r}(x+\stoich_{r})\tilde{p}_h(x+\stoich_{r},s)-
  w_{r}(x)\tilde{p}_h(x,s) \, ds, \\
  \label{eq:CMEsplit2}
  p_h(x,t+h) &= \tilde{p}_h(x,t+h)+\int_{t}^{t+h} 
  \sum_{r \in \R_{2}} w_{r}(x+\stoich_{r})p_h(x+\stoich_{r},s)-
  w_{r}(x)p_h(x,s) \, ds,
\end{align}
where $\R_{1} = \{1, \ldots, R_{1}\}$, $\R_{2} = \{R_{1}+1,\ldots,
R\}$. Loosely speaking, \eqref{eq:CMEsplit1} evolves the dynamics of
the first set of reactions in an auxiliary variable $\tilde{p}_h$ from
time $t$ to $t+h$, and \eqref{eq:CMEsplit2} similarly evolves the
second.

\subsection{Splitting in operational time}
\label{subsec:optime_split}

To obtain a concrete path-wise formulation which is more amenable to
analysis we first define the kernel step function
\begin{align}
  \sigma_{h}(t) &= 1-2 \left( \lfloor t/(h/2) \rfloor
  \mbox{{\footnotesize  mod }} 2 \right),
\end{align}
for convenience also visualized in Figure~\ref{fig:sigma}. This is a
piecewise constant \emph{\cadlag} function which may be used to
introduce `switching' events into the process that does not affect the
state but turns on or off selected parts of the dynamics. More
precisely, the split-step method
\eqref{eq:CMEsplit1}--\eqref{eq:CMEsplit2} for \eqref{eq:Poissrepr}
can be written in the operational time form
\begin{align}
  \label{eq:splitSDE}
  Y_{t} &= Y_{0}-\sum_{r \in \R_{1}} \stoich_{r} \Pi_{r} \left(
      \int_{0}^{t} (1+\sigma_{h}(s))w_{r}(Y_{s-}) \,
      ds \right) \\
    \nonumber
    &\phantom{= Y_{0}} -\sum_{r \in \R_{2}} \stoich_{r} \Pi_{r} \left(
      \int_{0}^{t} (1-\sigma_{h}(s))w_{r}(Y_{s-}) \,
      ds \right).
\end{align}
For convenience here and below we shall suppress the dependency on the
split-step length $h$; we simply write $Y(t)$ (or $Y_{t}$) instead of
$Y_h(t)$.

Eq.~\eqref{eq:splitSDE} is a partition of unity representation in
that, at any instant in \eqref{eq:splitSDE}, one of the sets of
reactions is turned off while the other operates at twice the
intensity. Since the length of each interval where the same set of
reactions is active is $h/2$, effectively the unit time for those
channels is evolved in steps of length $h$, in agreement with
\eqref{eq:CMEsplit1}--\eqref{eq:CMEsplit2}. In
\S\ref{subsubsec:Strang} below we show that the same type of
representation may be used when analyzing also the second order Strang
split method.

The main advantage with \eqref{eq:splitSDE} over
\eqref{eq:CMEsplit1}--\eqref{eq:CMEsplit2} is that the former may be
compared path-wise to \eqref{eq:Poissrepr}. Indeed, the convergence
results in \S\ref{subsec:convergence} concerns the behavior of
$\Expect\|(X-Y)(t)\|^{2}$ as the split-step $h \to 0$, where $X(t)$,
$Y(t)$ are solutions to \eqref{eq:Poissrepr} and \eqref{eq:splitSDE},
respectively. The approach to coupling processes via the random time
change representation was first used by Kurtz \cite{KurtzApprox} and
practically implies the \emph{Common Reaction Path} (CRP) method for
simulating coupled processes \cite{sensitivitySSA,aem_proceeding} (see
also \S\ref{subsec:implementation} below).

\begin{figure}[htp]
  \includegraphics{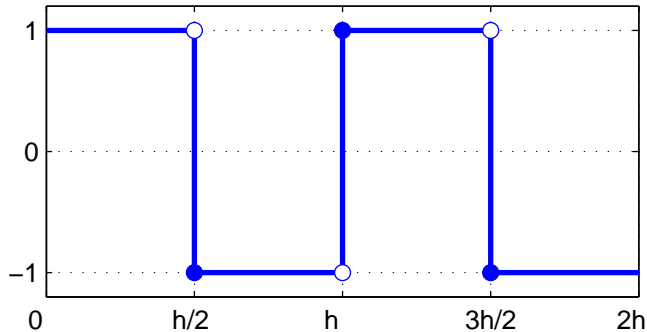}
  \caption{Definition of the piecewise constant \cadlag\ kernel
    function $\sigma_{h}(x)$.}
  \label{fig:sigma}
\end{figure}

\begin{assumption}
  \label{ass:split}
  In the following, our working assumptions will be that
  Assumption~\ref{ass:ass} holds for \emph{both} sub-systems
  $[\stoich^{(i)},w^{(i)}(x)]$, $i \in \{1,2\}$ in
  \eqref{eq:split1}--\eqref{eq:split2} and with \emph{the same}
  weight-vector $\lvec$. We separate the constants of the two
  sub-systems by using superscripts, as in $A^{(i)}$, $i \in \{1,2\}$,
  and additionally define $A^{(0)} := A^{(1)} \vee A^{(2)}$.
\end{assumption}

The assumption that the weight-vector $\lvec$ is the same for both
sub-systems as well as for the original description
\eqref{eq:Poissrepr} is mainly for convenience as it avoids switching
back and forth between equivalent norms. A further comment is that, in
view of a finite time-step $h$ it makes sense to require that both
sub-systems are well-posed in the sense of
Theorem~\ref{th:exist0}. However, one can rightly ask if this is
really necessary as $h \to 0$; for $h$ small enough finite-time
explosions are likely not going to be a problem. On balance we chose
to settle with the current \emph{sufficient} conditions as a complete
theory likely must contain several special cases (for an illustration,
see the numerical example in \S\ref{subsec:illposed}).

\begin{theorem}[\textit{Moment bound}]
  \label{th:Epbound}
  Let $Y(t)$ satisfy \eqref{eq:splitSDE} under
  Assumption~\ref{ass:split}. Then for any integer $p \ge 1$,
  \begin{align}
    \label{eq:Epbound}
    \Expect\lnorm{Y_{t}}^{p} &\le (\lnorm{Y_{0}}^{p}+1) \exp (Ct)-1,
  \end{align}
  with $C > 0$ a constant which depends on $p$ and on the relevant
  constants of the assumptions, but not on the split-step $h$.
\end{theorem}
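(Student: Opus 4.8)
The statement asks for a moment bound on the split-step process $Y_t$ that is uniform in the split-step $h$. The natural strategy is to mimic the proof of the underlying moment bound for the original process (Theorem~\ref{th:exist0}, i.e.\ Theorem~4.7 in \cite{jsdestab}) via Dynkin's formula, and then invoke a Gr\"onwall argument. Let me sketch the plan.

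The plan is to apply the integral (Dynkin) form \eqref{eq:Dynkin} of It\^o's formula to the test function $f(x) = \lnorm{x}^p = (\lvect x)^p$, but adapted to the split generator. Because the kernel $\sigma_h$ only \emph{reweights} the propensities — turning the first set on at rate $(1+\sigma_h)w_r$ and the second at rate $(1-\sigma_h)w_r$ — the split process $Y_t$ is itself a (time-inhomogeneous) jump process whose generator at time $s$ acts through the reactions $r\in\R_1$ with intensity $(1+\sigma_h(s))w_r$ and through $r\in\R_2$ with intensity $(1-\sigma_h(s))w_r$. First I would write, for the stopped process $\stop{t}=t\wedge\taustopping$,
\begin{align*}
  \Expect\lnorm{Y_{\stop{t}}}^p - \Expect\lnorm{Y_0}^p
  &= \int_0^{\stop{t}} \Expect\Big[ \sum_{r\in\R_1}(1+\sigma_h(s))
     \big(\lnorm{Y_s-\stoich_r}^p - \lnorm{Y_s}^p\big)w_r(Y_s) \\
  &\hphantom{= \int_0^{\stop{t}}\Expect\Big[} + \sum_{r\in\R_2}(1-\sigma_h(s))
     \big(\lnorm{Y_s-\stoich_r}^p - \lnorm{Y_s}^p\big)w_r(Y_s)\Big]\,ds.
\end{align*}
The key observation is that $0\le 1\pm\sigma_h(s)\le 2$ pointwise, so each reweighted term is dominated by twice the corresponding unweighted increment. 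Next I would expand the increment $\lnorm{Y_s-\stoich_r}^p-\lnorm{Y_s}^p$ using the binomial theorem in the scalar quantity $\lnorm{Y_s}=\lvect Y_s$, writing the difference as $(-\lvect\stoich_r)\,p\lnorm{Y_s}^{p-1}$ plus lower-order terms controlled by $(-\lvect\stoich_r)^2$ and higher powers of the stoichiometric weight. Summing over $r$ and applying Assumption~\ref{ass:split} parts \eqref{it:1bnd} and \eqref{it:bnd} (which now hold for \emph{both} subsystems with the \emph{same} $\lvec$, hence with constants $A^{(0)}$, $\alpha^{(0)}$, etc.) converts the generator action into an affine-in-$\lnorm{Y_s}^p$ bound of the form $C_1+C_2\Expect\lnorm{Y_s}^p$, where the factor $2$ from $1\pm\sigma_h\le 2$ is absorbed into the constant $C$ and, crucially, the bound is \emph{independent of $h$} since $\sigma_h$ has dropped out entirely.

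The main obstacle, as in the original theorem, is the bookkeeping for general $p$: the leading-order term of the increment contributes the linear growth via \eqref{it:1bnd}, but for $p\ge 2$ there is a genuinely quadratic contribution ($\lnorm{Y_s}^p\cdot\lnorm{Y_s}$-type terms arising from $\beta_2>0$ in \eqref{it:bnd}) that must be controlled. The way to close this cleanly is to run the estimate on $\Expect\lnorm{Y_{\stop{t}}}^p$ with the stopping time $\taustopping$ in place, so that all expectations are finite \emph{a priori} and Dynkin's formula \eqref{eq:Dynkin} is rigorously justified; the higher powers are then tamed either by absorbing them into the Gr\"onwall constant (when $\beta_2=0$) or, when $\beta_2>0$, by carrying an extra power of the initial moment exactly as in the $\lnorm{X_0}^{p+1}$ hypothesis of Theorem~\ref{th:exist0}. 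I would handle the general-$p$ combinatorics by an induction on $p$, using the bound already established for $p-1$ to control the cross terms. Finally, once I have
\[
  \Expect\lnorm{Y_{\stop{t}}}^p \le \lnorm{Y_0}^p + \int_0^t \big(C_1+C_2\Expect\lnorm{Y_{\stop{s}}}^p\big)\,ds,
\]
I would add $1$ to both sides, apply the integral form of Gr\"onwall's inequality to the quantity $\Expect\lnorm{Y_{\stop{t}}}^p+1$, and then let $\stopping\to\infty$ (so $\taustopping\to\infty$) using monotone convergence to remove the stopping and obtain \eqref{eq:Epbound} for the unstopped process. The resulting constant $C$ depends on $p$, on $A^{(0)}$, $\alpha^{(0)}$, $B^{(0)}$, $\beta_1^{(0)}$, $\beta_2^{(0)}$, but manifestly not on $h$, which is exactly the uniformity the theorem requires.
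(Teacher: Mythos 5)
Your skeleton is the paper's: Dynkin's formula \eqref{eq:Dynkin} for the stopped process with $f(x)=\lnorm{x}^{p}$, a bound on the generator action of each sub-system that is affine in $\lnorm{Y_{s}}^{p}$, absorption of the kernel through $0\le 1\pm\sigma_{h}\le 2$, Gr\"onwall, and removal of the stopping time. But your treatment of the case $\beta_{2}^{(0)}>0$ rests on a miscalculation, and it changes what you end up proving. In the expansion of $[\lvect(Y_{s}-\stoich_{r})]^{p}-[\lvect Y_{s}]^{p}$, the factor multiplying the second-order data $(\lvect\stoich_{r})^{2}w_{r}(Y_{s})$ is $\lnorm{Y_{s}}^{p-2}$ (and lower powers); this is exactly what \eqref{eq:signed_diffbound} of Lemma~\ref{lem:diffbound} records. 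Hence the $\beta_{2}\lnorm{Y_{s}}^{2}$ part of Assumption~\ref{ass:ass}~\eqref{it:bnd} contributes at most $C\beta_{2}\lnorm{Y_{s}}^{p}$ to the drift --- order $p$, never $p+1$. There are no ``$\lnorm{Y_{s}}^{p}\cdot\lnorm{Y_{s}}$-type terms'' here; those arise only in the quadratic-variation estimate (Lemma~\ref{lem:quadratic_variation}) behind the $\Expect\sup$ bound of Theorem~\ref{th:exist}, which is a different statement. This matters twice over: first, Theorem~\ref{th:Epbound} holds with no extra hypothesis and no case distinction when $\beta_{2}^{(0)}>0$, whereas your argument would only deliver it under the additional assumption $\lnorm{Y_{0}}^{p+1}<\infty$, i.e.\ a weaker theorem; second, your proposed repair is circular: if the drift of the $p$-th moment genuinely contained $(p+1)$-st powers, controlling them by ``carrying an extra power of the initial moment'' would require the fixed-time $(p+1)$-st moment bound, which by the same reasoning would need the $(p+2)$-nd, and so on --- an upward induction on $p$ cannot close that loop. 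Once the exponent bookkeeping is done correctly the obstacle evaporates, and Young plus Gr\"onwall finishes uniformly in $\beta_{2}$, exactly as in the paper.

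A second, smaller slip: the claim that ``each reweighted term is dominated by twice the corresponding unweighted increment'' is false when the increment is negative --- if $I<0$ and $\sigma_{h}(s)=-1$, then $(1+\sigma_{h}(s))I=0>2I$. Negative increments are precisely what Assumption~\ref{ass:ass}~\eqref{it:1bnd} with $\alpha<0$ (dissipative dynamics) is designed to allow, so you cannot pass the factor $2$ onto the raw increments. The correct order of operations is the paper's: first bound each summed generator action $G_{i}(y)$ using the assumptions and Young's inequality to obtain a \emph{nonnegative} quantity $\tfrac{C}{2}(1+\lnorm{y}^{p})$, and only then use the weights, e.g.\ via $(1+\sigma_{h})+(1-\sigma_{h})=2$, so that $h$ drops out. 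Since later in your outline you do apply the assumptions to the summed action and absorb the factor $2$ into the constant, this is a repairable ordering slip rather than a fatal one, but as literally written the domination step fails. (Also, removing the stopping time should invoke Fatou's lemma rather than monotone convergence: $\lnorm{Y_{t\wedge\taustopping}}^{p}$ is not monotone in $\stopping$.)
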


It will be convenient to quote the following basic inequality.

\begin{lemma}[\textit{Lemma~4.6 in \cite{jsdestab}}]
  \label{lem:diffbound}
  Let $H(x) \equiv (x+y)^{p}-x^{p}$ with $x \in \Realdom_{+}$ and $y
  \in \Realdom$. Then for integer $p \ge 1$ we have the bounds
  \begin{align}
    \label{eq:signed_diffbound}
    H(x) &\le p y x^{p-1}+
    2^{p-4} p(p-1) y^{2} \left[ x^{p-2}+
      |y|^{p-2} \right], \\
   \label{eq:abs_diffbound}
    |H(x)| &\le p |y| 2^{p-2} \left[ x^{p-1}+
      |y|^{p-1} \right].
  \end{align}
\end{lemma}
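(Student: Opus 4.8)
The plan is to prove both estimates by Taylor expansion of $t \mapsto (x+ty)^{p}$, handling \eqref{eq:abs_diffbound} with a first-order (mean-value) representation and \eqref{eq:signed_diffbound} with a second-order expansion carrying a Lagrange remainder, and then in each case to split a power of $x+|y|$ into separate powers of $x$ and $|y|$ via the elementary convexity inequality
\begin{align}
  \label{eq:powermean}
  (a+b)^{q} &\le 2^{q-1}(a^{q}+b^{q}), \qquad a,b \ge 0,\; q \ge 0,
\end{align}
which follows from convexity of $t \mapsto t^{q}$ for $q \ge 1$ (and holds with equality when $q=0$).

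For \eqref{eq:abs_diffbound} I would begin from the representation
\begin{align*}
  H(x) &= (x+y)^{p}-x^{p} = p y \int_{0}^{1} (x+ty)^{p-1} \, dt,
\end{align*}
valid for integer $p \ge 1$. Moving the absolute value inside the integral and using $|x+ty| \le x+|y|$ for $x \ge 0$ and $t \in [0,1]$ gives $|H(x)| \le p|y|(x+|y|)^{p-1}$, and \eqref{eq:powermean} with $q=p-1$ then produces exactly the claimed bound. No sign issues arise since the modulus is taken before estimating.

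For \eqref{eq:signed_diffbound} I would instead use second-order Taylor's theorem to write, for some $\theta \in (0,1)$,
\begin{align*}
  H(x) &= p y x^{p-1} + \tfrac{p(p-1)}{2} y^{2} (x+\theta y)^{p-2}.
\end{align*}
Because the prefactor $\tfrac{p(p-1)}{2} y^{2}$ is nonnegative, it suffices to bound $(x+\theta y)^{p-2}$ from above. Again $|x+\theta y| \le x+|y|$, so $(x+\theta y)^{p-2} \le (x+|y|)^{p-2}$ for $p \ge 2$, and \eqref{eq:powermean} with $q=p-2$ contributes the factor $2^{p-3}$; combined with $\tfrac{p(p-1)}{2}$ this gives precisely the stated constant $2^{p-4}p(p-1)$.

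The only delicate point is the sign of $(x+\theta y)^{p-2}$ when $p-2$ is odd and $x+\theta y$ is negative (possible since $y$ may be negative): there $(x+\theta y)^{p-2} \le 0 \le (x+|y|)^{p-2}$, so the upper bound persists trivially, while in all other cases $(x+\theta y)^{p-2} \le |x+\theta y|^{p-2} \le (x+|y|)^{p-2}$ directly. I would close by checking the degenerate exponents: for $p=1$ both inequalities reduce to the identity $H(x)=y$ (the remainder vanishing as $p(p-1)=0$), and for $p=2$ the exponent $q=p-2=0$ in \eqref{eq:powermean} holds with equality. This sign bookkeeping in the Lagrange remainder is the main, though mild, obstacle; the rest is the routine pairing of Taylor's theorem with the convexity inequality.
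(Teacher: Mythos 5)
The paper never proves this lemma: it is imported verbatim as Lemma~4.6 of \cite{jsdestab} and used as a black box, so there is no internal proof to compare your argument against, and I can only judge it on its own terms. On those terms it is correct. The representation $H(x)=py\int_0^1(x+ty)^{p-1}\,dt$ together with $|x+ty|\le x+|y|$ (valid because $x\ge 0$ and $t\in[0,1]$) yields $|H(x)|\le p|y|\,(x+|y|)^{p-1}$, and the dyadic splitting $(a+b)^{q}\le 2^{q-1}(a^{q}+b^{q})$ with $q=p-1$ gives \eqref{eq:abs_diffbound}. The Lagrange form $H(x)=pyx^{p-1}+\tfrac{p(p-1)}{2}y^{2}(x+\theta y)^{p-2}$ is legitimate because $t\mapsto(x+ty)^{p}$ is a polynomial (so smoothness is never an issue even where $x+ty<0$), your case analysis correctly disposes of the possibility $x+\theta y<0$ with $p-2$ odd, and the constants combine as $\tfrac12\cdot 2^{p-3}=2^{p-4}$, giving \eqref{eq:signed_diffbound}; the degenerate cases $p=1,2$ are checked. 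One slip worth fixing: you assert the auxiliary inequality for all $q\ge 0$, but it is false for $q\in(0,1)$ (take $b=0$: $a^{q}\le 2^{q-1}a^{q}$ fails); since every exponent you actually use is a nonnegative integer, restricting the claim to $q=0$ or $q\ge 1$ repairs this at no cost. For completeness, the natural alternative route --- and, given the combinatorial shape of the constants, plausibly the one taken in the cited reference --- is to expand $(x+y)^{p}$ by the binomial theorem, isolate the $k=1$ term, and use $\binom{p}{k}\le\tfrac{p(p-1)}{2}\binom{p-2}{k-2}$ for $k\ge 2$ to resum the tail into $\tfrac{p(p-1)}{2}y^{2}(x+|y|)^{p-2}$, which is exactly your Taylor remainder bound; from that point on the two arguments coincide, so your proof is a fully adequate self-contained substitute for the external citation.
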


\begin{proof}[\textit{Proof of Theorem~\ref{th:Epbound}}]
  The starting point is Dynkin's formula \eqref{eq:Dynkin} under the
  stopping time on $\lnorm{Y_{t}}$ defined in
  \eqref{eq:stopping}. With $f(x) = \lnorm{x}^{p} = [\lvect x]^{p}$ we
  get
  \begin{align}
    \nonumber
    \Expect\lnorm{Y_{\stop{t}}}^{p} &= \lnorm{Y_{0}}^{p}+
    \Expect\int_{0}^{\stop{t}} (1+\sigma_{h}(s))
    \overbrace{\sum_{r \in \R_{1}} w_{r}(Y_{s}) \left[ 
      \left[ \lvect (Y_{s}-\stoich_{r}) \right]^{p}-
      \left[ \lvect Y_{s} \right]^{p} \right]}^{=: G_{1}(Y_{s})} \, ds \\
    \label{eq:Gdef}
    &\hphantom{= \lnorm{Y_{0}}^{p}}
    +\Expect\int_{0}^{\stop{t}} (1-\sigma_{h}(s))
    \underbrace{\sum_{r \in \R_{2}} w_{r}(Y_{s}) \left[ 
      \left[ \lvect (Y_{s}-\stoich_{r}) \right]^{p}-
      \left[ \lvect Y_{s} \right]^{p} \right]}_{=: G_{2}(Y_{s})} \, ds.
  \end{align}
  Using the assumptions on the first sub-system
  $[\stoich^{(1)},w^{(1)}(x)]$ and the first part of
  Lemma~\ref{lem:diffbound} we obtain the bound
  \begin{align*}
    G_{1}(y) &\le p(A^{(1)}+\alpha^{(1)}\lnorm{y}) \lnorm{y}^{p-1}+
    \\ &\phantom{\le} 2^{p-3} p(p-1)
    (B^{(1)}+\beta_{1}^{(1)}\lnorm{y}+\beta_{2}^{(1)}\lnorm{y}^{2})
    (\lnorm{y}^{p-2}+\delta^{p-2}) \\
    &\le \frac{C}{2} \left( 1+\lnorm{y}^{p} \right),
  \end{align*}
  say, in which $\delta := \|\lvect \stoich^{(1)}\|_{\infty}$ and
  where Young's inequality was used several times to arrive at the
  second bound. A similar bound is readily found for $G_{2}$ so
  summing up from \eqref{eq:Gdef} we get
  \begin{align}
    \label{eq:momest}
    \Expect\lnorm{Y_{\stop{t}}}^{p} &\le \lnorm{Y_{0}}^{p}+
    \int_{0}^{t} C(1+\Expect\lnorm{Y_{\stop{s}}}^{p}) \, ds,
  \end{align}
  where $\stop{s} = s \wedge \taustopping$. By Grönwall's inequality
  this implies the bound
  \begin{align}
    \Expect\lnorm{Y_{\stop{t}}}^{p} &\le (\lnorm{Y_{0}}^{p}+1) \exp (Ct)-1,
  \end{align}
  which is independent of $\stopping$. We therefore arrive at
  \eqref{eq:Epbound} by letting $\stopping \to \infty$ and using
  Fatou's lemma.
\end{proof}




Recall that the quadratic variation of a process $(X_{t})_{t \ge 0}$
in $\Realdom^{D}$ can be defined by (convergence in probability)
\begin{align}
  [X]_{t} &= \lim_{\|M\| \to 0} \sum_{k \in M}
  \left\| X_{t_{k+1}}-X_{t_{k}} \right\|^{2}, \\
  \intertext{where the mesh $M = \{0 = t_{0} < t_{1} < \cdots < t_{n}
    = t\}$ and where $\|M\| = \max_{k} |t_{k+1}-t_{k}|$. Similarly, we
    define for later use also the total variation}
  V_{[0,t]}(X) &= \lim_{\|M\| \to 0} \sum_{k \in M}
  \left\| X_{t_{k+1}}-X_{t_{k}} \right\|.
\end{align}

\begin{lemma}
  \label{lem:quadratic_variation}
  Let $Y(t)$ satisfy \eqref{eq:splitSDE} under
  Assumption~\ref{ass:split}. Then the quadratic variation of
  $\lnorm{Y_{t}}^{p}$ is bounded by
  \begin{align}
    \label{eq:quadratic_variation}
    \Expect[\lnorm{Y}^{p}]_{t}^{1/2} &\le \Expect \int_{0}^{t}
    C(1+\lnorm{Y_{s}}^{p}+ \beta_{2}^{(0)}\lnorm{Y_{s}}^{p+1}) \, ds,
  \end{align}
  where $C > 0$ again is independent on the split-step $h$ and where
  $\beta_{2}^{(0)} := \beta_{2}^{(1)} \vee \beta_{2}^{(2)}$.
\end{lemma}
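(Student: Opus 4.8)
The statement reads $\Expect\big(([\lnorm{Y}^{p}]_{t})^{1/2}\big)$, the expectation of the square root of the quadratic variation of the \cadlag\ pure-jump process $\lnorm{Y_{t}}^{p} = [\lvect Y_{t}]^{p}$. This process is of finite variation and jumps by $H_{r}(Y_{t-})$, where $H_{r}(x) := [\lvect(x-\stoich_{r})]^{p}-[\lvect x]^{p}$, whenever reaction $r$ fires; consequently $[\lnorm{Y}^{p}]_{t} = \sum_{s \le t}(\Delta\lnorm{Y_{s}}^{p})^{2}$ and $V_{[0,t]}(\lnorm{Y}^{p}) = \sum_{s \le t}|\Delta\lnorm{Y_{s}}^{p}|$. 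The design choice that makes the stated form attainable is to bound the quadratic variation by the total variation \emph{path-wise}, using the elementary $\sqrt{\sum_{i}a_{i}^{2}} \le \sum_{i}|a_{i}|$, so that $([\lnorm{Y}^{p}]_{t})^{1/2} \le V_{[0,t]}(\lnorm{Y}^{p})$. This is what lets the square root pass \emph{inside} the time integral; bounding $\Expect[\lnorm{Y}^{p}]_{t}$ directly and applying Jensen would only yield a square root of an integral, not the asserted $\Expect\int_{0}^{t}(\cdots)\,ds$. Taking expectations reduces everything to estimating $\Expect V_{[0,t]}(\lnorm{Y}^{p})$.

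For that, I would work first under the stopping time $\taustopping$ of \eqref{eq:stopping} so that all jump sums are finite, and apply the compensation identity --- the absolute-value analogue of Dynkin's formula \eqref{eq:Dynkin}, with the split intensities $(1\pm\sigma_{h}(s))w_{r}(Y_{s-})$ read off from \eqref{eq:splitSDE} --- to obtain
\[
  \Expect V_{[0,\stop{t}]}(\lnorm{Y}^{p}) = \Expect\int_{0}^{\stop{t}}\Big(\sum_{r \in \R_{1}}(1+\sigma_{h}(s)) + \sum_{r \in \R_{2}}(1-\sigma_{h}(s))\Big)w_{r}(Y_{s})\,|H_{r}(Y_{s})|\,ds,
\]
in which reactions with $\stoich_{r} = 0$ drop out by conservativity. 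Since $0 \le 1\pm\sigma_{h}(s) \le 2$ uniformly, this is bounded by $2\Expect\int_{0}^{\stop{t}}\sum_{r}w_{r}(Y_{s})|H_{r}(Y_{s})|\,ds$ with an $h$-independent constant. I then estimate the integrand with the absolute-difference bound \eqref{eq:abs_diffbound} of Lemma~\ref{lem:diffbound} (taking $x = \lnorm{Y_{s}}$, $y = -\lvect\stoich_{r}$), which yields $\sum_{r}w_{r}|H_{r}| \le C\big(\lnorm{Y_{s}}^{p-1}\sum_{r}|\lvect\stoich_{r}|w_{r} + \sum_{r}|\lvect\stoich_{r}|^{p}w_{r}\big)$. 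Because the $\lvect\stoich_{r}$ form a fixed finite set, $|\lvect\stoich_{r}|^{k} \le c\,(\lvect\stoich_{r})^{2}$ for $k \in \{1,p\}$, so both inner sums are controlled through the quadratic bound Assumption~\ref{ass:ass}~\eqref{it:bnd}, applied to each sub-system and collected via Assumption~\ref{ass:split} into $\sum_{r}(\lvect\stoich_{r})^{2}w_{r}(Y_{s}) \le C(1+\lnorm{Y_{s}}+\beta_{2}^{(0)}\lnorm{Y_{s}}^{2})$. The prefactor $\lnorm{Y_{s}}^{p-1}$ then produces the top-order term $\beta_{2}^{(0)}\lnorm{Y_{s}}^{p+1}$, and Young's inequality absorbs the remaining lower-degree terms into $C(1+\lnorm{Y_{s}}^{p}+\beta_{2}^{(0)}\lnorm{Y_{s}}^{p+1})$. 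Letting $\stopping \to \infty$ finishes the argument: the left side increases to $V_{[0,t]}(\lnorm{Y}^{p})$ and the right side converges by monotone convergence, its limit being finite by Theorem~\ref{th:Epbound} with exponents $p$ and $p+1$.

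The main obstacle I anticipate is precisely the passage from the hypotheses to a bound on $\sum_{r}|\lvect\stoich_{r}|w_{r}$: estimate \eqref{eq:abs_diffbound} unavoidably brings in absolute jumps, whereas Assumption~\ref{ass:ass} only controls the \emph{signed} first moment \eqref{it:1bnd} and the \emph{quadratic} form \eqref{it:bnd}. Routing the absolute sums through \eqref{it:bnd} by comparison of $|\lvect\stoich_{r}|^{k}$ with $(\lvect\stoich_{r})^{2}$ is the step that both avoids needing any bound on the raw total propensity $\sum_{r}w_{r}$ and, together with the extra factor $\lnorm{Y_{s}}^{p-1}$ coming from the order-$p$ functional, explains the appearance of the degree-$(p+1)$ term carrying $\beta_{2}^{(0)}$.
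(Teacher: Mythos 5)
Your proposal is correct and takes essentially the same route as the paper's own proof: the paper likewise bounds $[\lnorm{Y}^{p}]_{\stop{t}}^{1/2}$ path-wise by the sum of absolute jumps (via $\|\cdot\|\le\|\cdot\|_{1}$), replaces the random measures by their compensators $(1\pm\sigma_{h}(s))w_{r}(Y_{s-})\,ds$ using the local martingale property under the stopping time, applies Lemma~\ref{lem:diffbound}~\eqref{eq:abs_diffbound}, controls the resulting sums through Assumption~\ref{ass:ass}~\eqref{it:bnd} for both sub-systems, and finally lets $\stopping\to\infty$ with finiteness guaranteed by Theorem~\ref{th:Epbound}. Your explicit comparison $|\lvect\stoich_{r}|^{k}\le c\,(\lvect\stoich_{r})^{2}$, which routes the absolute jump sums through the quadratic hypothesis, makes rigorous a step the paper leaves implicit.
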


\begin{proof}
  Instead of as in \eqref{eq:Gdef}, for brevity we shall use the
  following compact notation for sums involved in the two sub-systems
  which form the split-step method,
  \begin{align}
    \label{eq:split_sum}
    \sum_{r \in \R_{1},\R_{2}} (1\pm \sigma_{h}(s)) F(r) &:=
    \sum_{r \in \R_{1}} (1+\sigma_{h}(s)) F(r)+
    \sum_{r \in \R_{2}} (1-\sigma_{h}(s)) F(r).
  \end{align}
  Keeping this in mind we have
  \begin{align}
    \Expect \, [\lnorm{Y}^{p}]_{\stop{t}}^{1/2} &=
    \Expect \, \left[ \left( \int_{0}^{\stop{t}} 
      \sum_{r \in \R_{1},\R_{2}} \left( 
      \left[ \lvect (Y_{s}-\stoich_{r}) \right]^{p}-
      \left[ \lvect Y_{s} \right]^{p} \right)^{2} \, \mu_{r}(ds) \right)^{1/2} 
      \right].
  \end{align}
  Writing $\mu_r(dt) = (1\pm\sigma_{h}(t))
  w_r(Y_{t-})\,dt+\tilde{\mu}_r(dt)$, and from the inequality
  $\|\cdot\| \le \|\cdot\|_1$ we get after using that the random
  measure compensated with the deterministic intensity is a local
  martingale,
  \begin{align}
    &\le
    \Expect \left[ \int_{0}^{\stop{t}} 
      \sum_{r \in \R_{1},\R_{2}} (1\pm\sigma_{h}(s)) w_{r}(Y_{s}) \left| 
      \left[ \lvect (Y_{s}-\stoich_{r}) \right]^{p}-
      \left[ \lvect Y_{s} \right]^{p} \right| \, ds \right], \\
    \intertext{or, after using 
      Lemma~\ref{lem:diffbound}~\eqref{eq:abs_diffbound},}
    \nonumber
    &\le \Expect \left[ \int_{0}^{\stop{t}} 
      \sum_{r} (1\pm\sigma_{h}(s)) \, 
      p |\lvect \stoich_{r}| w_{r}(Y_{s}) \; 2^{p-2}
      \left[ \lnorm{Y_{s}}^{p-1}+|\lvect \stoich_{r}|^{p-1} \right]
      \, ds \right] \\
    \label{eq:special}
    &\le \Expect \left[ \int_{0}^{\stop{t}} 
      C(B^{(0)}+\beta_{1}^{(0)}\lnorm{Y_{s}}+\beta_{2}^{(0)}\lnorm{Y_{s}}^{2})
      \left[ \lnorm{Y_{s}}^{p-1}+\delta^{p-1} \right]
      \, ds \right]
  \end{align}
  by Assumption~\ref{ass:ass}~\eqref{it:bnd}, where $\delta = \|\lvect
  \stoich\|_{\infty}$. Using Theorem~\ref{th:Epbound} and letting
  $\stopping \to \infty$ we arrive at the stated bound.
\end{proof}

\begin{theorem}
  \label{th:exist}
  Let $Y_{t}$ satisfy \eqref{eq:splitSDE} under
  Assumption~\ref{ass:split} with $0 = \beta_{2}^{(0)} :=
  \beta_{2}^{(1)} \vee \beta_{2}^{(2)}$. Then if $\lnorm{Y_{0}}^{p} <
  \infty$, $\{Y_{t}\}_{t \ge 0} \in \Sspace{p}$ for all $h > 0$. If
  $\beta_{2}^{(0)} > 0$, then the conclusion remains under the
  additional requirement that $\lnorm{Y_{0}}^{p+1} < \infty$.
\end{theorem}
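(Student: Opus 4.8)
The plan is to upgrade the pointwise moment bound of Theorem~\ref{th:Epbound} to a bound on the expected \emph{supremum}, using the Burkholder--Davis--Gundy (BDG) inequality together with the quadratic variation estimate of Lemma~\ref{lem:quadratic_variation}. Membership $\{Y_t\} \in \Sspace{p}$ is by definition exactly the statement $\Expect \sup_{t \in [0,T]} \lnorm{Y_t}^{p} < \infty$ for every finite $T$, so I would target this quantity directly. As in the two preceding proofs, I would first work with the stopped process at $\taustopping$ from \eqref{eq:stopping}, so that every integral below is genuinely integrable, and recover the unstopped statement at the very end by letting $\stopping \to \infty$.

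First I would apply the It\^o formula \eqref{eq:Ito} to $f(Y_t) = \lnorm{Y_t}^{p}$ and split the driving random measures into compensator and martingale parts, $\mu_r(dt) = (1 \pm \sigma_{h}(t)) w_r(Y_{t-})\,dt + \tilde{\mu}_r(dt)$, exactly as was done in the proof of Lemma~\ref{lem:quadratic_variation}. This produces the semimartingale decomposition $\lnorm{Y_{\stop{t}}}^{p} = \lnorm{Y_0}^{p} + A_{\stop{t}} + M_{\stop{t}}$, in which $A$ is the continuous finite-variation drift already met in the proof of Theorem~\ref{th:Epbound} and $M$ is a local martingale. Taking the supremum over $[0,T]$ and then expectations, I would estimate the three contributions separately.

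The initial term contributes $\lnorm{Y_0}^{p}$, finite by hypothesis. For the drift I would take absolute values inside the integral and apply the bound \eqref{eq:abs_diffbound} of Lemma~\ref{lem:diffbound}, precisely as in the proof of Lemma~\ref{lem:quadratic_variation}, to obtain $\sup_{t \le T} |A_{\stop{t}}| \le \int_0^{T} C(1 + \lnorm{Y_s}^{p} + \beta_2^{(0)}\lnorm{Y_s}^{p+1})\,ds$, i.e.\ the same structure as the right-hand side of \eqref{eq:quadratic_variation}. For the martingale part, the $L^1$ BDG inequality gives $\Expect \sup_{t \le T} |M_{\stop{t}}| \le C\,\Expect[M]_{\stop{T}}^{1/2}$; since the drift is continuous of finite variation one has $[M] = [\lnorm{Y}^{p}]$, so this term is controlled verbatim by \eqref{eq:quadratic_variation}. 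In both cases finiteness follows from the moment bound \eqref{eq:Epbound}: when $\beta_2^{(0)} = 0$ the order-$p$ bound suffices under $\lnorm{Y_0}^{p} < \infty$, whereas when $\beta_2^{(0)} > 0$ the $\lnorm{Y_s}^{p+1}$ terms force the order-$(p+1)$ bound, which is exactly why the stronger requirement $\lnorm{Y_0}^{p+1} < \infty$ is imposed. Combining the three finite bounds yields $\Expect \sup_{t \le T} \lnorm{Y_{\stop{t}}}^{p} < \infty$, uniformly in $\stopping$, and since $\sup_{t \le T}\lnorm{Y_{t \wedge \taustopping}}^{p} \uparrow \sup_{t \le T}\lnorm{Y_t}^{p}$ as $\stopping \to \infty$, monotone convergence gives the claim.

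The main obstacle I anticipate is the bookkeeping around localization and the application of BDG to a pure-jump local martingale: one must confirm that the compensated integrals indeed define a local martingale to which the inequality applies, that the stopped quadratic variation is integrable so the bound is meaningful, and that the passage $\stopping \to \infty$ is legitimate. Everything else reduces to quoting Theorem~\ref{th:Epbound} and Lemma~\ref{lem:quadratic_variation} with the moment order dictated by whether $\beta_2^{(0)}$ vanishes, mirroring the structure of Theorem~\ref{th:exist0}.
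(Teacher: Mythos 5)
Your proposal follows essentially the same route as the paper: the semimartingale decomposition of $\lnorm{Y_{\stop{t}}}^{p}$ into drift plus local martingale, the quadratic variation bound of Lemma~\ref{lem:quadratic_variation} combined with Burkholder's (BDG) inequality for the martingale part, the case split on $\beta_{2}^{(0)}$ dictating moment order $p$ versus $p+1$, and the final passage $\stopping \to \infty$. The only cosmetic difference is that you close the estimate by inserting the \textit{a priori} moment bound \eqref{eq:Epbound} directly, where the paper runs a short Gr\"onwall recursion on $\Expect \sup_{s' \in [0,s]} \lnorm{Y_{s'}}^{p}$ in the $\beta_{2}^{(0)} = 0$ case; both closures are valid.
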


Note that the somewhat technical details concerning the case
$\beta_{2}^{(0)} \not = 0$ are shared with the solution of
\eqref{eq:SDE} itself, see Theorem~\ref{th:exist0}.

\begin{proof}
  This result follows essentially by combining
  Theorem~\ref{th:Epbound} and Lemma~\ref{lem:quadratic_variation}. We
  get under the same stopping time as before
  \begin{align*}
    \lnorm{Y_{\stop{t}}}^{p} &= \lnorm{Y_{0}}^{p}+\int_{0}^{\stop{t}} 
    (1+\sigma_{h}(s)) G_{1}(Y_{s})+(1-\sigma_{h}(s)) G_{2}(Y_{s}) \, ds
    +M_{\stop{t}},
  \end{align*}
  with $G_{1}$ and $G_{2}$ defined in \eqref{eq:Gdef}. The quadratic
  variation of the local martingale $M_{\stop{t}}$ can be estimated via
  Lemma~\ref{lem:quadratic_variation},
  \begin{align}
    \label{eq:Mqv}
    \Expect \, [M]_{\stop{t}}^{1/2} &\le 
    \Expect \int_{0}^{\stop{t}} C(1+\lnorm{Y_{s}}^{p}+
    \beta_{2}^{(0)}\lnorm{Y_{s}}^{p+1}) \, ds.
  \end{align}
  \textit{The case $\beta_{2}^{(0)} = 0$.} Using the bound
  \eqref{eq:momest} for the drift part as obtained in the proof of
  Theorem~\ref{th:Epbound} we get
  \begin{align*}
    \lnorm{Y_{\stop{t}}}^{p} &\le \lnorm{Y_{0}}^{p}+\int_{0}^{\stop{t}} 
    C(1+\lnorm{Y_{s}}^{p}) \, ds+|M_{\stop{t}}|, \\
    \intertext{combining this with \eqref{eq:Mqv} we obtain from
      Burkholder's inequality \cite[Chap.~IV.4]{protterSDE} that}
    \Expect \sup_{s \in [0,\stop{t}]} \lnorm{Y_{s}}^{p} &\le
    \lnorm{Y_{0}}^{p}+\int_{0}^{\stop{t}}
    C(1+\Expect\sup_{s' \in [0,s]} \lnorm{Y_{s'}}^{p}) \, ds.
  \end{align*}
  It follows that $\Expect \, \sup_{s \in [0,\stop{t}]}\lnorm{Y_{s}}^{p}$ is
  bounded in terms of the initial data and time $t$. Using Fatou's
  lemma the result follows by letting $\stopping \to \infty$.

  \textit{The case $\beta_{2}^{(0)} > 0$.} By Theorem~\ref{th:Epbound}
  we still have from \eqref{eq:Mqv} that
  \begin{align*}
    \Expect \, [M]_{\stop{t}}^{1/2} &\le
    \int_{0}^{\stop{t}} 
    C(1+\Expect\lnorm{Y_{s}}^{p+1}) \, ds 
    \le (e^{C\stop{t}}-1)(\lnorm{Y_{0}}^{p+1}+1).
  \end{align*}
  where we similarly obtain a bound in terms of the initial data
  $\|Y_{0}\|^{p+1}$.
\end{proof}

\subsection{Auxiliary lemmas}
\label{subsec:lemmas}

It is clear by now that the qualities of the kernel function
$\sigma_{h}(\cdot)$ will play a role in the behavior of the split-step
method. This motivates the following brief discussion.
\begin{lemma}
  \label{lem:sigma}
  Let $f: \Realdom \to \Realdom$ be a \cadlag\ piecewise constant
  function. Then
  \begin{align}
    \label{eq:sigma_lemma}
    \left| \int_{0}^{t} \sigma_{h}(s) f(s) \, ds \right| &\le
    \frac{h}{2} |f(t)|+
    \frac{h}{2} V_{[0,t]}(f),
  \end{align}
  where the total absolute variation may be exchanged with the square
  root of the quadratic variation $[f]_{t}^{1/2}$. Furthermore, if $t$
  is a multiple of $h$, then the first term on the right side of
  \eqref{eq:sigma_lemma} vanishes.
\end{lemma}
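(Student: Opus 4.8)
The plan is to reduce the whole estimate to elementary properties of the antiderivative $\Sigma_{h}(t) := \int_{0}^{t}\sigma_{h}(s)\,ds$. Since $\sigma_{h}$ equals $+1$ on each interval $[mh,mh+h/2)$ and $-1$ on $[mh+h/2,(m+1)h)$, its primitive $\Sigma_{h}$ is the continuous, piecewise linear ``triangle wave'' that rises from $0$ to $h/2$ and falls back to $0$ on every period of length $h$. First I would record the two facts this gives: $0 \le \Sigma_{h}(t) \le h/2$ for all $t$, and $\Sigma_{h}(mh) = 0$ for every integer $m$, because each complete period contributes zero to the integral.

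Next I would integrate by parts in the Stieltjes sense. As $f$ is \cadlag\ and piecewise constant it has finite variation, while $\Sigma_{h}$ is absolutely continuous; hence the covariation and simultaneous-jump terms in the integration-by-parts formula drop out, $\Sigma_{h}(s-)=\Sigma_{h}(s)$, and $f(s-)$ may be replaced by $f(s)$ under $\sigma_{h}(s)\,ds$ (the two differ only on a Lebesgue-null set). This yields the identity
\[ \int_{0}^{t}\sigma_{h}(s)f(s)\,ds = f(t)\,\Sigma_{h}(t) - \int_{0}^{t}\Sigma_{h}(s)\,df(s), \]
where the last integral is the finite sum $\sum_{0<s_{k}\le t}\Sigma_{h}(s_{k})\,\Delta f(s_{k})$ over the jumps of $f$. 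Taking absolute values and using $|\Sigma_{h}|\le h/2$ on both terms gives $|f(t)\Sigma_{h}(t)|\le (h/2)|f(t)|$ and $|\int_{0}^{t}\Sigma_{h}\,df|\le (h/2)\sum_{k}|\Delta f(s_{k})| = (h/2)V_{[0,t]}(f)$, which is exactly \eqref{eq:sigma_lemma}. The final assertion is then immediate: when $t=mh$ the boundary term carries the factor $\Sigma_{h}(mh)=0$ and disappears.

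The delicate point is the claimed exchange of $V_{[0,t]}(f)$ for $[f]_{t}^{1/2}$, and here I would be honest that it cannot hold path-wise with the same constant. Applying Cauchy--Schwarz to $\sum_{k}\Sigma_{h}(s_{k})\Delta f(s_{k})$ only produces the factor $(\sum_{k}\Sigma_{h}(s_{k})^{2})^{1/2}$, which is controlled by $(h/2)$ times the \emph{square root of the number of jumps}, not by $(h/2)[f]_{t}^{1/2}$. Indeed $[f]_{t}^{1/2}\le V_{[0,t]}(f)$ always, so the $[f]_{t}^{1/2}$ version is strictly stronger; it fails, for instance, for two equal jumps placed at successive peaks of $\Sigma_{h}$, where the left side equals $(h/2)V_{[0,t]}(f)$ but $[f]_{t}^{1/2}<V_{[0,t]}(f)$.

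I would therefore interpret the exchange in the mean-square sense in which the lemma is actually consumed. When $f$ carries a martingale part $M$ (as it does for $f=g(Y_{s})$ with $Y$ the jump process), the corresponding piece $\int_{0}^{t}\Sigma_{h}(s)\,dM_{s}$ is itself a local martingale with quadratic variation $\int_{0}^{t}\Sigma_{h}(s)^{2}\,d[M]_{s}$. Since $\Sigma_{h}$ is deterministic and bounded by $h/2$, the Burkholder--Davis--Gundy inequality then yields $\Expect\,|\int_{0}^{t}\Sigma_{h}\,dM| \le C\,\Expect[\int_{0}^{t}\Sigma_{h}(s)^{2}\,d[M]_{s}]^{1/2}\le C(h/2)\,\Expect[M]_{t}^{1/2}$, i.e.\ precisely the root-quadratic-variation form matching \eqref{eq:quadratic_variation}. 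Pinning down that this martingale reading is the sense in which the substitution feeds the convergence analysis is where I expect to spend the most care.
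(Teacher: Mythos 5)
Your proof of the displayed inequality \eqref{eq:sigma_lemma} and of the endpoint claim is, in essence, the paper's own: both arguments introduce $\Sigma_{h}(t)=\int_{0}^{t}\sigma_{h}(s)\,ds$, integrate (sum) by parts against the jumps of $f$, and then use $|\Sigma_{h}|\le h/2$ together with $\Sigma_{h}(mh)=0$, the triangle inequality giving the total-variation bound. Up to bookkeeping --- the paper writes the parts formula as a finite Abel summation over the discontinuity points, you write it as a Stieltjes integral --- there is no difference for this part.

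The value of your write-up lies in the quadratic-variation clause, which you correctly refuse to prove path-wise: as a path-wise statement it is false, and the paper's own justification (it asserts the QV case follows ``from the Cauchy-Schwartz inequality'') does not close. Cauchy-Schwarz applied to $\sum_{k}\Delta f(s_{k})\Sigma_{h}(s_{k})$ produces the factor $\bigl(\sum_{k}\Sigma_{h}(s_{k})^{2}\bigr)^{1/2}$, which is only bounded by $(h/2)\sqrt{N}$ with $N$ the number of jumps, exactly as you observe. Your counterexample is valid: for jumps of size $a$ at $s=h/2$ and $s=3h/2$ and $t=2h$,
\begin{align*}
\left|\int_{0}^{2h}\sigma_{h}(s)f(s)\,ds\right| \;=\; ah \;=\; \frac{h}{2}V_{[0,2h]}(f)
\;>\; \frac{h}{2}\,[f]_{2h}^{1/2} \;=\; \frac{ah}{\sqrt{2}},
\end{align*}
and the first term is dropped since $t$ is a multiple of $h$. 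One caveat: because your example relies on dropping the boundary term, it leaves open whether the QV form might hold with the first term retained --- which is how the paper actually invokes the lemma when bounding $B_{r}$ in the proof of Theorem~\ref{th:converge}. It cannot: place jumps $+a$ at the peaks $s=(2k-1)h/2$ and $-a$ at the zeros $s=kh$, $k=1,\ldots,n$. Then the left side equals $nah/2$ while $|f(t)|\le a$ and $[f]_{t}^{1/2}=a\sqrt{2n}$, so the two sides differ by a factor of order $\sqrt{n/2}$; no constant in place of $h/2$ can rescue the path-wise claim.

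Your proposed repair --- read the exchange in expectation, split $f$ into a compensator plus a martingale $M$, and apply Burkholder--Davis--Gundy to $\int_{0}^{t}\Sigma_{h}(s)\,dM_{s}$, whose quadratic variation is $\int_{0}^{t}\Sigma_{h}^{2}(s)\,d[M]_{s}\le (h/2)^{2}[M]_{t}$ --- is indeed the legitimate sense in which the lemma can feed \eqref{eq:Brbnd} and Lemma~\ref{lem:quadratic_variation}, provided the drift piece is kept as well (it is controlled by your total-variation bound). Note, though, that an even simpler repair suffices for Theorem~\ref{th:converge}: under the stopping time all propensities are bounded by $C_{0}^{(r)}(P)$, so the compensator formula gives $\Expect V_{[0,\stop{t}]}(Y)\le C(P)\stop{t}$, and the total-variation form of the lemma alone already yields $\Expect[B_{r}]\le C_{1}h+C_{2}h\stop{t}$, which is all that the proof of the theorem requires. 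So the defect you found in the lemma is real, but it is localized and the paper's main convergence result survives.
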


\begin{proof}
  Define
  \begin{align}
    \label{eq:Sigma_def}
    \Sigma_{h}(t) &\equiv \int_{0}^{t} \sigma_{h}(s) \, ds,
  \end{align}
  and observe that $|\Sigma_{h}(\cdot)| \le h/2$. Denote the left side
  of \eqref{eq:sigma_lemma} by $J$. Then with $(t_{k})_{k = 0}^{N}$
  the points of discontinuity of $f$ in $(0,t)$, but augmented with
  the two boundary points $\{0,t\}$, we obtain from summation by parts
  that
  \begin{align*}
    J &= \left| \sum_{k = 0}^{N-1} f(t_{k}) \; \Delta
      \Sigma_{h}(t_{k}) \right| = \left|f(t) \Sigma_{h}(t)-\sum_{k =
        0}^{N-1} \Delta f(t_{k}) \; \Sigma_{h}(t_{k+1}) \right|.
 \end{align*}
 The stated result now follows from the triangle inequality and, for
 the case of the quadratic variation, from the Cauchy-Schwartz
 inequality.
\end{proof}

\begin{lemma}
  \label{lem:sigma2}
  Let $G:\Realdom^{D} \to \Realdom$ be a globally Lipschitz continuous
  function with Lipschitz constant $L$ and let $f:\Realdom \to
  \Realdom^{D}$ be a piecewise constant \cadlag\ function. Then
  \begin{align}
    \label{eq:sigma_lemma2}
    \left| \int_{0}^{t} \sigma_{h}(s) G(f(s)) \, ds \right| &\le
    \frac{h}{2} |G(f(t))|+
    \frac{h}{2} L V_{[0,t]}(f),
  \end{align}
  with the same additional variants and simplifications as listed in
  Lemma~\ref{lem:sigma}.
\end{lemma}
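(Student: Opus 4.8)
The plan is to reduce Lemma~\ref{lem:sigma2} to the scalar case already handled in Lemma~\ref{lem:sigma}. Observe that the integrand $\sigma_h(s) G(f(s))$ is a product of the scalar kernel function $\sigma_h$ with the scalar-valued composition $g(s) := G(f(s))$. Since $f$ is a piecewise constant \cadlag\ function and $G$ is (globally Lipschitz, hence) continuous, the composition $g = G \circ f$ is itself a piecewise constant \cadlag\ function with the same (or fewer) points of discontinuity as $f$. Therefore Lemma~\ref{lem:sigma} applies directly to $g$, yielding
\begin{align*}
  \left| \int_{0}^{t} \sigma_{h}(s) G(f(s)) \, ds \right| &\le
  \frac{h}{2} |G(f(t))|+\frac{h}{2} V_{[0,t]}(G \circ f).
\end{align*}
The only remaining task is to estimate the total variation $V_{[0,t]}(G \circ f)$ in terms of $V_{[0,t]}(f)$.

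**Controlling the total variation of the composition.**

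For any mesh $M = \{0 = t_0 < t_1 < \cdots < t_n = t\}$, the Lipschitz bound on $G$ gives termwise
\begin{align*}
  \left| G(f(t_{k+1}))-G(f(t_k)) \right| &\le L \left\| f(t_{k+1})-f(t_k) \right\|.
\end{align*}
Summing over $k$ and passing to the limit $\|M\| \to 0$ shows $V_{[0,t]}(G \circ f) \le L \, V_{[0,t]}(f)$, which combines with the previous display to give exactly \eqref{eq:sigma_lemma2}. This step is the conceptual heart of the argument, but it is routine: it is the standard fact that a Lipschitz map contracts total variation by at most the Lipschitz constant.

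**Inheriting the variants.** The additional variants and simplifications carry over verbatim from Lemma~\ref{lem:sigma}. Replacing the total variation by the square root of the quadratic variation $[G \circ f]_t^{1/2}$ is justified in the same way, since the Cauchy--Schwartz step in the proof of Lemma~\ref{lem:sigma} applies to the scalar function $g = G \circ f$, and one likewise has $[G \circ f]_t^{1/2} \le L \, [f]_t^{1/2}$ from the termwise Lipschitz bound before taking the limit. Finally, when $t$ is a multiple of $h$ the first term vanishes because $\Sigma_h(t) = 0$ there, exactly as in Lemma~\ref{lem:sigma}. I do not anticipate any genuine obstacle here; the entire lemma is a vector-valued wrapper around Lemma~\ref{lem:sigma}, and the only thing to verify carefully is that composition with a Lipschitz function preserves the piecewise constant \cadlag\ structure and contracts both variation functionals by the factor $L$.
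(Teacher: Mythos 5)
Your proposal is correct and follows essentially the same route as the paper: the paper's proof likewise observes that $g(t) := G(f(t))$ is a piecewise constant \cadlag\ function satisfying the hypotheses of Lemma~\ref{lem:sigma}, with the jump bound $|\Delta g(t_{k})| \le L \|\Delta f(t_{k})\|$ giving the factor $L$ in front of the variation terms. Your write-up merely spells out the mesh-summation and limiting argument that the paper leaves implicit.
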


\begin{proof}
  This follows because $g(t) := G(f(t))$ satisfies the requirements
  for $f$ in Lemma~\ref{lem:sigma}; clearly $|\Delta g(t_{k})| =
  |\Delta G(f(t_{k}))| \le L \|\Delta f(t_{k})\|$.
\end{proof}

\subsection{Strong convergence}
\label{subsec:convergence}

We are now ready to formulate and prove our main result of the paper.
\begin{theorem}[\textit{Strong convergence}]
  \label{th:converge}
  Let $X(t)$ and $Y(t)$ be solutions to \eqref{eq:Poissrepr} and
  \eqref{eq:splitSDE} with $X_{0} = Y_{0}$ and under
  Assumptions~\ref{ass:ass} and \ref{ass:split}, respectively. Then
  \begin{align}
    \label{eq:est0}
    \Expect\|(Y-X)(t)\|^{2} &\le h \, C_{t},
  \end{align}
  for $C_{t}$ some constant dependent on the final time $t$.
\end{theorem}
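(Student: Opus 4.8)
The plan is to couple $X$ and $Y$ through the \emph{same} family of unit-rate Poisson processes $\{\Pi_r\}$ appearing in \eqref{eq:Poissrepr} and \eqref{eq:splitSDE}, and to funnel the whole discrepancy through the arguments fed to each $\Pi_r$. Writing $T_r^X(t):=\int_0^t w_r(X_s)\,ds$ and $T_r^Y(t):=\int_0^t (1\pm\sigma_h(s))w_r(Y_s)\,ds$ for these operational times (the sign being $+$ for $r\in\R_1$ and $-$ for $r\in\R_2$), the initial condition $X_0=Y_0$ gives $(Y-X)(t)=-\sum_r \stoich_r\,[\Pi_r(T_r^Y(t))-\Pi_r(T_r^X(t))]$. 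Each gap $T_r^Y-T_r^X$ then separates into a \emph{Lipschitz} part $\int_0^t (w_r(Y_s)-w_r(X_s))\,ds$ and a genuine \emph{splitting} part $\pm\int_0^t \sigma_h(s)w_r(Y_s)\,ds$. As usual I would work under the stopping time $\taustopping$ of \eqref{eq:stopping}, so that $\lnorm{X}\vee\lnorm{Y}\le\stopping$, the local constants $L_r(\stopping)$ of Assumption~\ref{ass:ass}\eqref{it:lip} are in force, and the operational times are a.\,s.\ bounded; the bound would then be extended by letting $\stopping\to\infty$ with Fatou's lemma, the requisite moments being supplied by Theorem~\ref{th:Epbound}.

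First I would dispose of the splitting part. By Lemma~\ref{lem:sigma2} with $G=w_r$ and $f=Y$ one has $|\int_0^t \sigma_h(s)w_r(Y_s)\,ds| \le \tfrac{h}{2}|w_r(Y_t)| + \tfrac{h}{2}L_r(\stopping)\,V_{[0,t]}(Y)$, which is $O(h)$ in expectation: $\Expect|w_r(Y_t)|$ is controlled through the linear growth of $w_r$ and Theorem~\ref{th:Epbound}, while $\Expect V_{[0,t]}(Y)\le \sum_r\|\stoich_r\|\,\Expect T_r^Y(t)$ is likewise bounded uniformly in $h$. Together with Assumption~\ref{ass:ass}\eqref{it:lip} this yields the basic gap estimate $\Expect|T_r^Y(t)-T_r^X(t)| \le L_r(\stopping)\int_0^t \Expect\|(Y-X)(s)\|\,ds + Ch$.

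The crux, and the step I expect to be the main obstacle, is the order in which the two moment estimates are taken. A direct attack on $\Expect\|(Y-X)(t)\|^2$ via \eqref{eq:finalStop} gives $\Expect\|(Y-X)(t)\|^2 \le C\sum_r\Expect|T_r^Y-T_r^X| \le C\int_0^t \Expect\|(Y-X)(s)\|\,ds + Ch$, but the right-hand side carries only the \emph{first} moment of the error (an artefact of the Poisson identity variance $=$ mean in Lemmas~\ref{lem:PiStop}--\ref{lem:Stopping}); closing this self-referentially forces $\Expect\|\cdot\|\le(\Expect\|\cdot\|^2)^{1/2}$, a square-root Gr\"onwall that degrades the rate to $O(\sqrt h)$. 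The remedy is to establish the first-moment estimate at the full rate first. Setting $e(t):=\Expect\|(Y-X)(t)\|$ and invoking \eqref{eq:Stopping1}, so that $\Expect|\Pi_r(T_r^Y)-\Pi_r(T_r^X)| = \Expect|T_r^Y-T_r^X|$, the gap estimate gives the \emph{linear} inequality $e(t) \le C\int_0^t e(s)\,ds + Ch$, whence Gr\"onwall yields $e(t)\le C_t h$.

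With the $L^1$ rate in hand the $L^2$ rate follows at once, and this is exactly where the Poisson structure repays the effort. Applying \eqref{eq:finalStop} to each coupled pair (the a.\,s.\ bound on $T_r^X,T_r^Y$ under the stopping time furnishing the constant $B$) and then inserting the gap estimate, $\Expect\|(Y-X)(t)\|^2 \le C\sum_r\Expect|T_r^Y(t)-T_r^X(t)| \le C\int_0^t e(s)\,ds + Ch \le C_t h$, the last step using $e(s)\le C_s h$ \emph{without} any further Gr\"onwall argument. Letting $\stopping\to\infty$ then gives \eqref{eq:est0}. The remaining delicate point is the localization: the constants produced carry the $\stopping$-dependent factors $L_r(\stopping)$, so removing the stopping time cleanly requires the moment bounds of Theorem~\ref{th:Epbound} to dominate the tail $\{\taustopping<t\}$, and that is where I would concentrate the technical care.
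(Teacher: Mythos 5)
Your coupling (common unit-rate Poisson processes), stopping time, and decomposition of each operational-time gap into a Lipschitz part plus a splitting part of size $O(h)$ via Lemma~\ref{lem:sigma2} are exactly the paper's. The genuine difference is how the Gr\"onwall loop is closed. You correctly observe that \eqref{eq:finalStop} returns only the \emph{first} moment $\Expect|T_r^Y-T_r^X|$ on the right, and you resolve the mismatch by a two-pass bootstrap: first a linear Gr\"onwall at the $L^1$ level via \eqref{eq:Stopping1}, giving $\Expect\|(Y-X)(\stop{t})\| \le C_t h$ under the stopping time, and then a single application of \eqref{eq:finalStop} with no further Gr\"onwall. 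This is valid. The paper instead closes the loop in one pass at the $L^2$ level using the ``integer inequality'' $n \le n^{2}$, $n \in \Intdom$: since $(Y-X)(s) \in \Intdom^{D}$, one has $\|(Y-X)(s)\| \le \|(Y-X)(s)\|^{2}$, so the first-moment integrand is dominated by the second moment and the ordinary Gr\"onwall inequality applies directly. In other words, the obstacle you single out as ``the main obstacle'' is real over $\Realdom^{D}$ but evaporates on the lattice; your bootstrap buys independence from the integer-valuedness of the state (it would survive, say, real-valued or hybrid approximations of one sub-system), at the cost of an extra pass. (A side remark: the naive self-referential closure you reject would in fact be worse than $O(\sqrt{h})$ --- the square-root Gr\"onwall/Bihari bound carries an additive $O(t^{2})$ term that does not vanish with $h$ --- but since you do not use it, nothing is affected.)

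There is, however, a gap at the end, and you half-acknowledge it yourself: ``letting $\stopping \to \infty$'' does not give \eqref{eq:est0}, and Fatou's lemma is of no help, because every constant in your stopped estimate ($L_r(\stopping)$, the intensity bounds $C_0^{(r)}(\stopping)$, hence $C_t = C_t(\stopping)$) diverges as $\stopping \to \infty$; the bound $\Expect\|(Y-X)(\stop{t})\|^{2} \le C_t(\stopping)\,h$ becomes vacuous in that limit. The paper completes the argument differently: it writes
\begin{align*}
  \Expect\|(Y-X)(t)\|^{2} &= \Expect\left[\|(Y-X)(t)\|^{2}[t<\taustopping]\right]+M,
  \qquad M := \Expect\left[\|(Y-X)(t)\|^{2}[t\ge\taustopping]\right],
\end{align*}
bounds $M$ by Cauchy--Schwarz and Markov's inequality,
$M \le \left(\Expect\|(Y-X)(t)\|^{4}\right)^{1/2}\stopping^{-1/2}
\left(\Expect\sup_{s\in[0,t]}\lnorm{X_{s}}\vee\lnorm{Y_{s}}\right)^{1/2}$,
where the fourth and sup-moments are finite uniformly in $h$ by Theorems~\ref{th:exist0} and \ref{th:exist} (this is the real reason those \textit{a priori} results are needed, not merely to justify the stopping time), and then, for fixed $\varepsilon \in (0,1)$, chooses $\stopping$ large enough that $M \le \varepsilon\,\Expect\|(Y-X)(t)\|^{2}$ and absorbs this term into the left-hand side, arriving at \eqref{eq:est1} with constants that depend on $\varepsilon$ through the choice of $\stopping$. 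Your proposal needs this (or an equivalent) tail argument spelled out; as written, the de-localization step --- precisely the step for which the moment theory of \S\ref{subsec:optime_split} was built --- is missing.
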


In the formulation above, the actual estimate that goes into
\eqref{eq:est0} is elaborated upon in \eqref{eq:est1} below. Also, for
brevity and inspired by most actual use, we only consider the case of
deterministic initial data.

\begin{proof}
  Under the stated assumptions both processes are well behaved
  (Theorem~\ref{th:exist0} and \ref{th:exist}), so the strategy of
  proof is to define a suitable stopping time $\taustopping$ such that
  the probability that $t \ge \taustopping$ can be made arbitrarily
  small. We put
  \begin{align}
    \nonumber
    \taustopping &:= 
    \inf_{t \ge 0}\{\lnorm{X_{t}} \vee \lnorm{Y_{t}} > \stopping\}, \\
    \intertext{and as before $\stop{t} = t \wedge \taustopping$.
      Subtracting \eqref{eq:Poissrepr} from \eqref{eq:splitSDE} we
      get}
    \nonumber
    (Y-X)(\stop{t}) &= -\sum_{r \in \R_{1},\R_{2}} \stoich_{r} \Biggl[
      \Pi_{r} \left(
      \int_{0}^{\stop{t}} (1 \pm \sigma_{h}(s))w_{r}(Y_{s-}) \, ds \right)- \\
      \label{eq:startdiff}
      &\hphantom{= -\sum_{r \in \R_{1},\R_{2}} \stoich_{r} \Biggl[}
      \Pi_{r} \left(
      \int_{0}^{\stop{t}} w_{r}(X_{s-}) \, ds \right) \Biggr],
  \end{align}
  where the sign is to be chosen in accordance with
  \eqref{eq:split_sum}.

  Under the stopping time and using the local Lipschitz condition we
  first produce the basic estimates
  \begin{align*}
    &\int_{0}^{\stop{t}} w_{r}(X_{s-}) \, ds \le 
    \int_{0}^{\stop{t}} w_{r}(0)+P L_{r} \, ds \le C_0^{(r)}(P) \stop{t}, \\
    &\int_{0}^{\stop{t}} (1 \pm \sigma_{h}(s))w_{r}(Y_{s-}) \, ds 
    \le 2C_0^{(r)}(P) \stop{t},
  \end{align*}
  with, for brevity, $L_{r} \equiv L_{r}(P)$, and using that $|1 \pm
  \sigma_{h}| = 2$.

  Taking the square Euclidean norm and expectation value of
  \eqref{eq:startdiff} we find by Lemma~\ref{lem:Stopping} in the form
  of \eqref{eq:finalStop} using the above basic bounds that
  \begin{align}
    \Expect \| (Y-X)(\stop{t})\|^{2} \le \|\stoich\|^{2} \sum_{r=1}^{R}
    (4C_0^{(r)} \stop{t}+1) \Expect [A_r],
  \end{align}
  in terms of
  \begin{align}
    \nonumber
    A_{r} &= \left| \int_{0}^{\stop{t}} w_{r}(Y_{s-})-w_{r}(X_{s-}) \,
      ds \pm \int_{0}^{\stop{t}} \sigma_{h}(s)w_{r}(Y_{s-}) \, ds \right|. \\
    \intertext{Using Assumption~\ref{ass:ass}~\eqref{it:lip} anew we get}
    \nonumber
    A_{r} &\le \int_{0}^{\stop{t}} L_{r} \|(Y-X)(s-)\| \,
    ds +\underbrace{\left| \int_{0}^{\stop{t}} \sigma_{h}(s)w_{r}(Y_{s-}) \,
        ds \right|}_{=: B_{r}}. \\
    \intertext{Also, by Lemma~\ref{lem:sigma2} we obtain}
    \nonumber
    B_{r} &\le \frac{h}{2} |w_{r}(Y_{\stop{t}-})|+
    \frac{h}{2} L_{r} [Y]_{\stop{t}-}^{1/2}. \\
    \intertext{For the quadratic variation we note that, since
      $\|\cdot\| \le \|\cdot\|_{1} \le \lnorm{\cdot}$, we have that
      $[Y]_{t} \le [\lnorm{Y}]_{t}$, where the case $p = 1$ of
      Lemma~\ref{lem:quadratic_variation} applies. Estimating using
      Theorem~\ref{th:Epbound} we get after some work,}
    \label{eq:Brbnd}
    \Expect[B_{r}] &\le C_{1}^{(r)}(P)h+C_{2}^{(r)}(P)h \stop{t} \, \exp(C\stop{t}),
  \end{align}
  where the constants do not depend on $h$, and where $C_{1}^{(r)}$
  may be taken as zero provided that $\stop{t}$ is a multiple of $h$.

  Summing this over $r$ we get
  \begin{align*}
    \Expect \| (Y-X)(\stop{t})\|^{2} &\le
    C_{1}h+C_{2}h \stop{t} \, \exp(C\stop{t})+ \\
    &\hphantom{\le}
    \underbrace{\|\stoich\|^{2} \sum_{r = 1}^{R} 
    (4C_0^{(r)}\stop{t}+1)L_{r}}_{=: L(\stop{t}) = L(\stop{t},P)}
    \Expect \left[ \int_{0}^{\stop{t}} \|(Y-X)(s)\|^{2} \,
      ds \right],
    \intertext{where the ``integer inequality'', $n \le n^{2}$ for $n
      \in \Intdom$, was used. By Grönwall's inequality this implies
      the bound}
    \Expect \| (Y-X)(\stop{t})\|^{2} &\le 
    h \left[ C_{1}+C_{2} \stop{t} \, \exp(C\stop{t}) \right]
    \exp(\stop{t}L(\stop{t})).
  \end{align*}

  Using brackets to denote characteristic functions we write
  \begin{align*}
    \Expect\|(Y-X)(t)\|^{2} &= \Expect \left[ \|(Y-X)(t)\|^{2}[t
      < \taustopping]\right]+
    \underbrace{\Expect \left[ \|(Y-X)(t)\|^{2}
        [t \ge \taustopping] \right]}_{=: M}. \\
    \intertext{To bound $M$, note first that by Cauchy-Schwartz's inequality,}
    M &\le \left(
        \Expect\|(Y-X)(t)\|^{4} \right)^{1/2} \left( \Prob[t \ge
        \taustopping] \right)^{1/2}. \\
    \intertext{Since $\Prob[t \ge \taustopping] = \Prob[\sup_{s
          \in [0,t]} \lnorm{X_{s}} \vee \lnorm{Y_{s}} > \stopping]$ we
      get from Markov's inequality}
    M &\le \left(
    \Expect\|(Y-X)(t)\|^{4} \right)^{1/2} P^{-1/2} \left( \Expect \sup_{s
      \in [0,t]} \lnorm{X_{s}} \vee \lnorm{Y_{s}} \right) ^{1/2}.
  \end{align*}
  By Theorem~\ref{th:exist0} and \ref{th:exist} we find that $M$
  converges to zero as $P \to \infty$. In particular, for any given
  $\varepsilon \in (0,1)$, we can find $P$ large enough (and hence
  also $C_{1}$, $C_{2}$, and $L(t)$) such that 
  \begin{align}
    \nonumber
    M &\le \varepsilon \Expect\|(Y-X)(t)\|^{2}.
    \intertext{Combining we thus get}
    \label{eq:est1}
    \Expect\|(Y-X)(t)\|^{2} &\le \frac{h}{1-\varepsilon}
    \left[ C_{1}+C_{2} t \, \exp(Ct) \right]
    \exp(t L(t)),
  \end{align}
  where $C_{1}$, $C_{2}$, and $L(t)$ depend on $\varepsilon$ via the
  choice of $P$.
\end{proof}

We now offer some brief comments on this result. The estimated error
\emph{growth} in any given interval of time $[0,t]$ is clearly quite
pessimistic, but is on the other hand also quite general, relying as
it does mainly on the existence of local Lipschitz constants. Also,
the fact that the constant $C_{1}$ may be disregarded when $t$ is a
multiple of the split-step $h$, is perhaps mostly of theoretical
interest as the other terms seem to dominate by far. One may simply
think of this as a smaller error when the split-step solution is
viewed at the discrete mesh $[0,h,2h,\ldots]$.

Finally we comment also that Theorem~\ref{th:converge} may be
strengthen in the direction of convergence in
\begin{align}
  \Expect \sup_{s \in [0,t]} \|(Y-X)(t)\|^{2},
\end{align}
by bounding the martingale part and relying on Burkholder's
inequality. Since this is somewhat tedious and does not add much
insight we have chosen to omit it altogether.

%


\section{Numerical examples}
\label{sec:experiments}

To illustrate the theory developed in the previous sections, but also
as a means to investigate the sharpness of some of the bounds, a few
selected numerical examples are presented here. In
\S\ref{subsec:implementation} we briefly discuss the actual
implementation of the method implied by \eqref{eq:splitSDE}, and in
\S\ref{subsec:bd} and \S\ref{subsec:dimer} the convergence of the
split-step method applied to two one-dimensional examples is
investigated. Concretely, our computational analysis investigates the
convergence dependency in the presence of non-linearities, the weak
convergence behavior, and the qualities of higher order split-step
methods. In \S\ref{subsec:bimol} convergence over longer
time-intervals is discussed and in \S\ref{subsec:illposed}, finally,
we look at a split which violates Assumption~\ref{ass:split}.

\subsection{Implementation}
\label{subsec:implementation}

The idea to use the operational time representation
\eqref{eq:Poissrepr} to devise computational algorithms has been
employed previously for time-parallel algorithms \cite{ssa_parareal}
and for parameter sensitivity computations
\cite{aem_proceeding,sensitivitySSA}. The \emph{Common Reaction Path
  Method} \cite{sensitivitySSA} simulates \eqref{eq:Poissrepr} using
$R$ separate streams of random numbers and can be regarded as a
careful implementation of the \emph{Next Reaction Method}
\cite{gillespiemodern} such that a consistent operational time is
always defined. An additional improvement reported for spatial
problems in \cite{aem_proceeding} is to handle rates that become zero
in a somewhat careful way. By explicitly storing the operational time
$\tau^{\mathrm{old}}$ and associated rate $w^{\mathrm{old}}$
\emph{before} the rate vanished we can ``activate'' the reaction
channel by using the rescaling technique \cite{gillespiemodern},
\begin{align}
  \tau^{\mathrm{new}} = t_{\mathrm{current}}+
  \left(\tau^{\mathrm{old}}-t_{\mathrm{current}}\right)
  w^{\mathrm{old}}/w^{\mathrm{new}},
\end{align}
where $w^{\mathrm{new}}$ is the first non-zero rate encountered. As
opposed to drawing a new random number whenever the channel is
reactivated, this technique preserves the consistency of the current
operational time.

All these implementation techniques transfer nicely to the split-step
formulation \eqref{eq:splitSDE}. The kernel function $\sigma_{h}$ may
be thought of as generating deterministic events at points in time
which are multiples of $h/2$, where the active and passive pathways
simply change roles. Besides being able to compare trajectories
path-wise for different values of $h$, a great feature with this
implementation strategy is of course that the limit $h \to 0$ is
directly computable.

For the computations presented below we used the estimator
\begin{align}
  \label{eq:mean}
  \Expect[(Y-X)(t)]^{2} &\approx M \equiv 
  \frac{1}{N}\sum_{i = 1}^{N} (Y-X)(t; \Probelem_{i})^{2},
  \intertext{where $\Probelem_{i}$ indicate independent trajectories
    coupled and computed according to the previous description. To
    estimate the uncertainty in \eqref{eq:mean} we compute}
  \label{eq:std}
  S^{2} &\equiv \frac{1}{N-1}\sum_{i = 1}^{N}
  \left[(Y-X)(t; \Probelem_{i})^{2}-M \right]^{2},
\end{align}
and use a suitable multiple of $S/\sqrt{N}$ as a measure of the
uncertainty.

\subsection{Linear birth-death process}
\label{subsec:bd}

We first consider the model
\begin{align}
  \label{eq:bd}
  &\emptyset \xrightleftharpoons[\mu X]{k} X,
\end{align}
with $[k,\mu] = [5,0.05]$ for time $t \in [0,100]$ and with $X(0) =
50$. This model approaches a steady-state Poissonian distribution
around the mean value 100 and executes about $10^{3}$ events in the
given time interval. Sample illustrations of this process are
displayed in Figure~\ref{fig:bdsample}.

In the notation of Assumption~\ref{ass:ass} the birth-death model is
characterized by $\alpha = -\mu$, $\beta_{1} = \mu/2$, and $\beta_{2}
= 0$. By the negative sign of $\alpha$, the dynamics is dissipative
for states $x > k/\mu$ and we note also that the propensities are
globally Lipschitz with constant $L = \mu$. Computational results are
reported in comparison with the dimerization model, next to be
introduced.

\begin{figure}[htp]
  \includegraphics{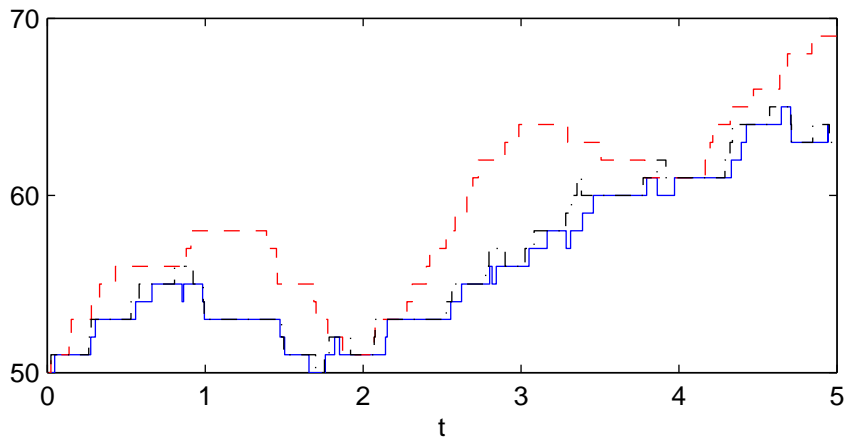}
  \caption{Sample trajectories of the linear birth-death
    model. \textit{Solid:} direct simulation, \textit{dashed:}
    split-step solution ($h = 2$), \textit{dash-dot:} split-step
    solution ($h = 1/4$). For $h = 2$ the periodic effect of the
    kernel function $\sigma_{h}$ is clearly visible.}
  \label{fig:bdsample}
\end{figure}

\subsection{Dimerization}
\label{subsec:dimer}

As a simple nonlinear extension of \eqref{eq:bd} we consider
\begin{align}
  \label{eq:dim}
  &\emptyset \xrightarrow{k} X, \quad X+X \xrightarrow{\nu X(X-1)} \emptyset.
\end{align}
We normalize the model such that the equilibrium mean
$\sqrt{k/(2\nu)}$ coincide with that of \eqref{eq:bd} and we also use
the same birth constant $k$. This means that for both models, about
the same number of events is generated in the considered interval of
time.

It is instructive to try to reason about the effect of the
non-linearity in \eqref{eq:dim} in comparison with the fully linear
model \eqref{eq:bd}. Although by necessity $\alpha = 0$ in
Assumption~\ref{ass:ass}~\eqref{it:1bnd}, it still holds true that the
dynamics is dissipative for states larger than the equilibrium mean
value. Also, due to the quadratic reaction, there is no longer a
global Lipschitz constant in Assumption~\ref{ass:ass}~\eqref{it:lip},
but one may note that close to the equilibrium mean value, $L \sim \nu
\times (x-1) \sim \mu^{2}/(2k) \times k/\mu = \mu/2$. Since the
quadratic reaction involves two molecules one may argue that the
effective Lipschitz constant for \eqref{eq:dim} approximately matches
that of \eqref{eq:bd}.

A more striking difference between the two models can be found in
Assumption~\ref{ass:ass}~\eqref{it:bnd}. Namely, for \eqref{eq:dim}
the left side reads
\begin{align}
  \label{eq:dimbnd}
  k/2 \left[1+4\nu/k \times x(x-1) \right] &\sim 3k/2, \\
  \intertext{whereas for \eqref{eq:bd} we have}
  \label{eq:bdbnd}
  k/2 \left[1+\mu/k \times x\right] &\sim k,
\end{align}
again, assuming that $x$ is close to the equilibrium mean value.

\subsubsection{Strong convergence}

We first consider the strong convergence of the split-step method. The
mean square error as a function of the split-step $h$ is shown in
Figure~\ref{fig:bd_dim_convergence} and the results show that,
although both the birth-death model \eqref{eq:bd} and the dimerization
model \eqref{eq:dim} are unbounded problems, the strong order is still
$1/2$ as predicted by Theorem~\ref{th:converge}.

The interesting observation to be made is that, with the exception of
the case $h = 1$, the mean square error for the dimerization problem
is consistently almost a factor of two larger than for the birth-death
problem (the measured factor falls in the range $[1.5,2.1]$ for the
cases studied). It is not easy to strictly analyze the reasons behind
this phenomenon but we may argue heuristically as follows.

Let us take \eqref{eq:est1} in Theorem~\ref{th:converge} for some
fixed value of $\varepsilon$ as an estimate of the error. By the
set-up of the measurements, $C_{1} = 0$ in \eqref{eq:est1}, and we
have also argued previously that the effective Lipschitz constants for
the two models are about the same. The difference between the two
models has therefore been isolated to the expression $C_{2}ht
\exp(Ct)$ in the right side of \eqref{eq:est1}, which can be traced
back to the bound of $B_{r}$ in \eqref{eq:Brbnd}. In turn, this
estimate comes from the bound on the quadratic variation in
Lemma~\ref{lem:quadratic_variation} and relies indirectly also on the
moment estimate from Theorem~\ref{th:Epbound}. In the present case the
first few moments of \eqref{eq:bd} and \eqref{eq:dim} are of
comparable magnitude so we thus focus our attention to the constant
$C$ in \eqref{eq:quadratic_variation} of
Lemma~\ref{lem:quadratic_variation}. In the proof, this constant
emerges in \eqref{eq:special} and is a direct consequence of
Assumption~\ref{ass:ass}~\eqref{it:bnd}.

This assumption was investigated in
\eqref{eq:dimbnd}--\eqref{eq:bdbnd} above where we found a difference
between the two models in the form of an overall factor of $3/2$ and a
factor of $2$ when considering the non-constant part, in striking
agreement with the measured factor $\in [1.5,2.1]$.

It is an interesting and challenging question if the above heuristic
way of reasoning can somehow be put on firm grounds. In particular, it
would be very useful if the use of `effective' Lipschitz constants
could render a consistent analysis.

\begin{figure}[htp]
  \includegraphics{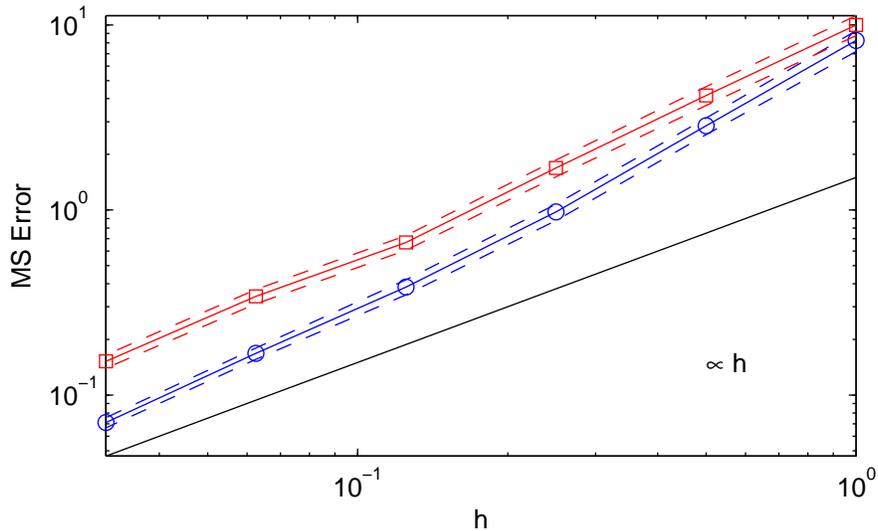}
  \caption{Convergence in mean square of the split-step method with
    decreasing $h$ for the linear birth-death model \eqref{eq:bd}
    (\textit{circles}) and for the dimerization \eqref{eq:dim}
    (\textit{squares}). Dashed lines indicate the estimator's
    uncertainty $\pm S/\sqrt{N}$ (cf.~\eqref{eq:std}). For both
    cases the method converges faster initially but approaches the
    strong order $1/2$ as $h$ becomes smaller.}
  \label{fig:bd_dim_convergence}
\end{figure}

\subsubsection{Weak convergence}

We briefly look also at the weak convergence of the split-step
scheme. Hence we estimate for varying split-step $h$,
\begin{align}
  \label{eq:weak}
  &\left| \Expect[f(Y_{t})]-\Expect[f(X_{t})] \right|,
  \intertext{with $f$ some smooth function and otherwise following the
    computational procedure in \S\ref{subsec:implementation}. To be
    concrete we took}
  &f_{1}(x) = x, \\
  &f_{2}(x) = x(x-1),
\end{align}
that is, the first two factorial moments. In analogy with
Figure~\ref{fig:bd_dim_convergence}, in
Figure~\ref{fig:bd_dim_wconvergence} we report the error
\eqref{eq:weak} for these two cases. As expected we find a first order
weak error when measured in the split-step $h$. Perhaps more
interesting is the observation that the errors for the two models are
very similar and that there is no visible impact from the
non-linearity.

\begin{figure}[htp]
  \includegraphics{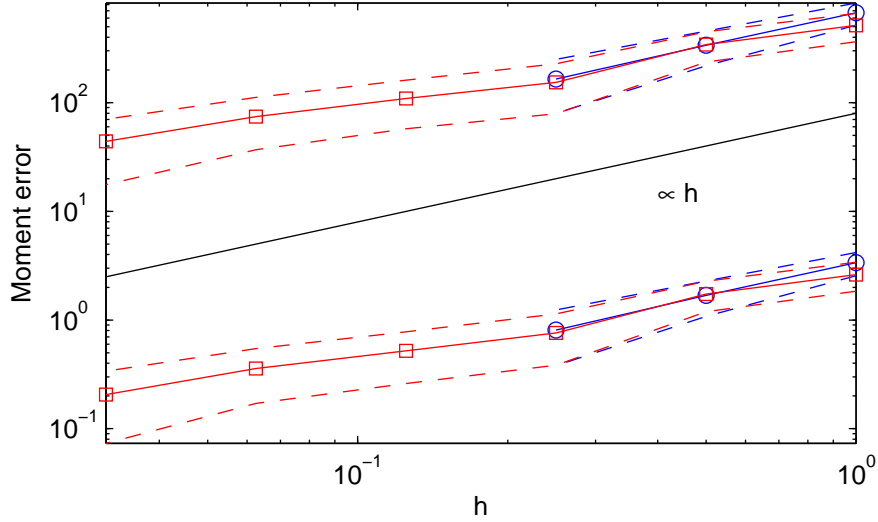}
  \caption{Weak convergence of the split-step method with decreasing
    $h$. \textit{Circles}: linear birth-death model \eqref{eq:bd} and
    \textit{squares}: dimerization \eqref{eq:dim}. \textit{Top:}
    second order factorial moment, \textit{bottom:} first order
    factorial moment. For the birth-death model, only data for the
    larger values of $h$ are available as the variance was too large
    to determine the sign of the error within the target uncertainty.}
  \label{fig:bd_dim_wconvergence}
\end{figure}

\subsubsection{The 2nd order Strang split}
\label{subsubsec:Strang}

The second order Strang split is traditionally written in operator
form in the style of \eqref{eq:CMEsplit1}--\eqref{eq:CMEsplit2},
{\small
\begin{align}
  \label{eq:CMEsplit1_Strang}
  \tilde{p}_h(x,t+h/2) &= p_h(x,t)+\int_{t}^{t+h/2} 
  \sum_{r \in \R_{1}} w_{r}(x+\stoich_{r})\tilde{p}_h(x+\stoich_{r},s)-
  w_{r}(x)\tilde{p}_h(x,s) \, ds, \\
  \label{eq:CMEsplit2_Strang}
  \tilde{\tilde{p}}_h(x,t+h) &= \tilde{p}_h(x,t+h/2)+\int_{t}^{t+h} 
  \sum_{r \in \R_{2}} w_{r}(x+\stoich_{r})\tilde{\tilde{p}}_h(x+\stoich_{r},s)-
  w_{r}(x)\tilde{\tilde{p}}_h(x,s) \, ds, \\
  \label{eq:CMEsplit3_Strang}
  p_h(x,t+h) &= \tilde{\tilde{p}}_h(x,t+h)+\int_{t+h/2}^{t+h} 
  \sum_{r \in \R_{1}} w_{r}(x+\stoich_{r})p_h(x+\stoich_{r},s)-
  w_{r}(x)p_h(x,s) \, ds,
\end{align}}
which via two intermediate steps takes us from time $t$ to $t+h$. A
moments consideration gives that this is just the same thing as
substituting the kernel function $\sigma_{h}(s)$ in
\eqref{eq:splitSDE} with the time shifted version
$\sigma_{h}(s+h/4)$. Importantly, the resulting compact notation is
open to the same kind of analysis performed in \S\ref{sec:analysis}
and we draw the conclusion that also this method can be expected to
converge at strong order $1/2$. We should mention though, that to
strictly prove that the convergence order is not actually higher might
require some more work.

A feature with the split method
\eqref{eq:CMEsplit1_Strang}--\eqref{eq:CMEsplit3_Strang} is that it
can be expected to be second order weakly convergent. This follows
heuristically from the fact that it is a second order operator
splitting method in the finite-dimensional case, and hence can be
expected to perform as such also in cases that are of effectively
bounded character.

In Figure~\ref{fig:bd_dim_convergence2} we display the mean square
strong error as a function of the split-step $h$ for the two models
considered in this section. The asymptotic behavior is quite similar
to Figure~\ref{fig:bd_dim_convergence} in that we still approach the
strong order $1/2$ as $h \to 0$ and in that the error reduction factor
between the two models is about the same, or perhaps slightly larger
$\in [1.7,3.2]$.

By far the most striking difference is that the mean square error is
now between 1.5 to 2 \emph{orders} smaller than before. It is
unfortunately quite difficult to explain this in the setting of
\S\ref{subsec:convergence} since the analysis there would be quite
similar upon substituting $\sigma_{h}(s) \mapsto
\sigma_{h}(s+h/4)$. In particular, assuming the Strang split to be
second order weakly convergent, it is difficult to see where this
feature could enter the analysis effectively.

\begin{figure}[htp]
  \includegraphics{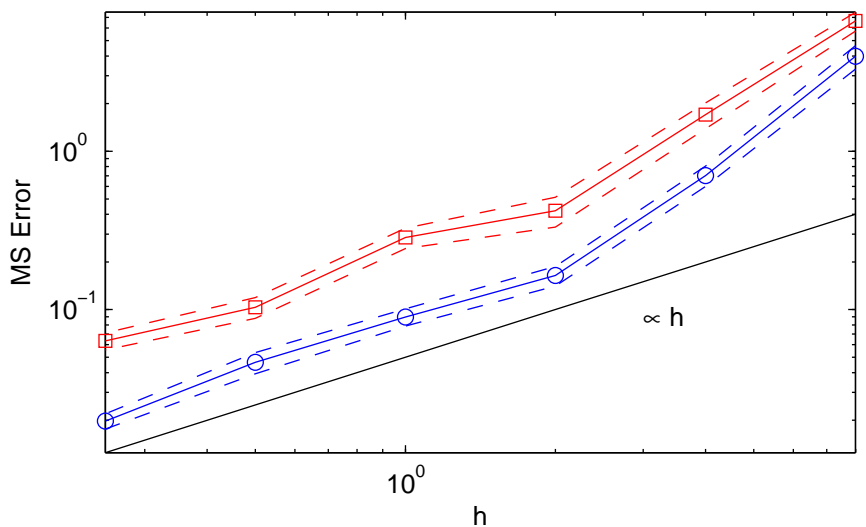}
  \caption{Convergence in mean square of the Strang split method
    \eqref{eq:CMEsplit1_Strang}--\eqref{eq:CMEsplit3_Strang}, following
    closely the notation in
    Figure~\ref{fig:bd_dim_convergence}. Although the asymptotic
    strong order of convergence is still $1/2$, the error is
    considerably smaller for the same values of $h$.}
  \label{fig:bd_dim_convergence2}
\end{figure}

\subsection{Elementary bimolecular reaction, case of no equilibrium}
\label{subsec:bimol}

As a slightly more involved model we consider the following
bimolecular birth-death system,
\begin{align}
  \label{eq:bimol}
  \left. \begin{array}{c}
    \emptyset \xrightarrow{k_{1}} X \\
    \emptyset \xrightarrow{k_{1}} Y \\
    X+Y \xrightarrow{k_{2} XY} \emptyset
  \end{array} \right\},
\end{align}
with the stoichiometric matrix
\begin{align}
  \stoich &= \begin{bmatrix*}[r]
    -1 & 0 & 1 \\
    0 & -1 & 1
  \end{bmatrix*}
\end{align}
and reaction propensities $w(x) = [k_{1},k_{1},k_{2}x_{1}x_{2}]^{T}$
for $x = [x_{1},x_{2}]^{T}$.

To get some feeling for the behavior of \eqref{eq:bimol}, we define
the difference process $U(t) = (X-Y)(t) \in \Intdom$. From It\^o's
formula \eqref{eq:Ito} with $f(x) = x_{1}-x_{2}$ we get
\begin{align}
  dU_{t} &= df(X_{t}) = -[-1,1,0] \, \fatmu(dt),
\end{align}
which is equivalent to the model
\begin{align}
  &\emptyset \xrightleftharpoons[k_{1}]{k_{1}} U,
\end{align}
which is just a constant intensity random walk on all of
$\Intdom$. One can solve this explicitly in terms of two independent
Poisson distributions,
\begin{align}
  U_{t} &= U_{0}+\Pi_{1}(k_{1}t)-\Pi_{2}(k_{1}t)
  \sim U_{0}+\mathcal{N}(U_{0},2k_{1}t), \quad \mbox{$t \to \infty$,}
\end{align}
for $\mathcal{N}$ a Gaussian random variable of the indicated mean and
variance. Hence the system takes longer and longer excursions away
from the origin and we have an example of an equilibrium density which
clearly does not exist.

We split the problem \eqref{eq:bimol} with $\R_{1} = \{1,2\}$ and
$\R_{2} = \{3\}$, that is, with the two birth processes evolved
simultaneously. In Figure~\ref{fig:bimolconvergence} we display the
mean square error as a function of time for several values of the
split-step parameter $h$. Experimentally we find that the strong order
of convergence is still $1/2$ even for quite long simulation times and
despite the fact that the behavior of the underlying process is open,
visiting states further and further away.

\begin{figure}[htp]
  \includegraphics{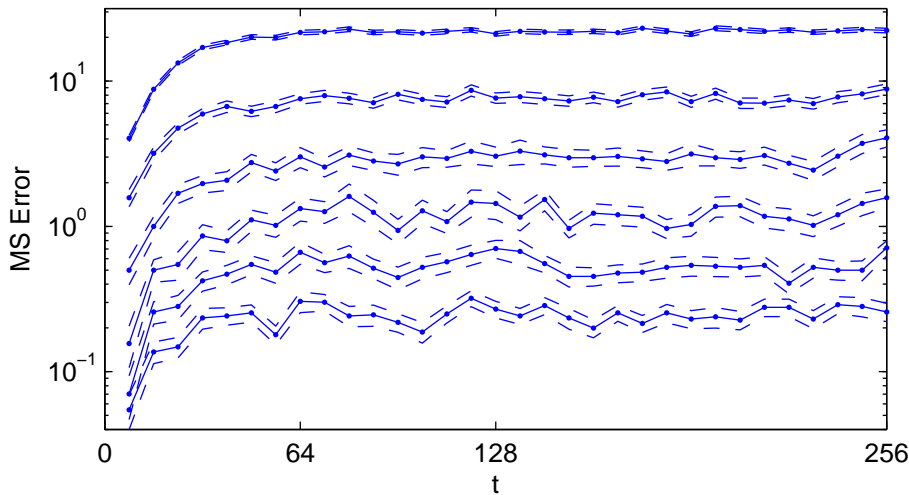}
  \caption{Mean square error of the split-step method for the
    bimolecular problem \eqref{eq:bimol} as a function of time
    $t$. From top to bottom: $h = [4,2,1,1/2,1/4,1/8]$. The asymptotic
    strong order of the method is $q \approx
    [0.76,0.67,0.67,0.61,0.55]$ for the cases shown in the figure and
    averaging over the data for $t \in [128,256]$.}
  \label{fig:bimolconvergence}
\end{figure}

\subsection{An ill-posed split}
\label{subsec:illposed}

In this section we apply the split-step method to a model which does
not comply with Assumption~\ref{ass:split}. This is quite challenging
computationally and thus the model settled for was selected after
trying several quite different cases. Although somewhat artificially
looking, the rather simple system chosen was
\begin{align}
  \label{eq:illposed}
  \left. \begin{array}{ll}
      \emptyset \xrightarrow{10} X & 3X \xrightarrow{x(x-1)(x-2)/2} X \\
      3X \xrightarrow{x(x-1)(x-2)/2} X  & 3X \xrightarrow{x(x-1)(x-2)} 4X
  \end{array} \right\}.
\end{align}
The stoichiometric vector is given by $\stoich = [-1,-2,-2,1]$ and
hence the problem is very strongly dissipative for states large enough
in view of the fact that the ingoing \emph{cubic} reactions
dominates. However, by splitting the system according to $\R_{1} =
\{1,2\}$ and $\R_{2} = \{3,4\}$, we have that the second pair violates
Assumption~\ref{ass:split}. In fact, this sub-system can be shown to
explode in the second moment for $t \lesssim X(0)^{-3}/3$ whenever
$X(0) \ge 3$ despite the fact that the net drift is zero
\cite[Proposition~4.1]{jsdestab}.

In the simulation we worked with the stopped process $\stop{X}(t) =
X(t) \wedge \stopping$, where $\stopping = 10^{3}$ was used and where
$t \in [0,1]$ with $X(0) = 10$ was considered. In
Figure~\ref{fig:illposeddivergence} we report the results of a
convergence study for a selection of comparably small split-step sizes
$h$. The convergence behavior is rather intriguing, with an initial
much slower strong order convergence rate of about $1/4$, but which
picks up speed for $h$ smaller than about $10^{-3}/3$, which is also
approximately the maximum time interval over which the second
sub-system does not explode in variance. Although the method still
seems to converge, the effects of choosing a split containing an
ill-posed sub-system are clearly visible.

\begin{figure}[htp]
  \includegraphics{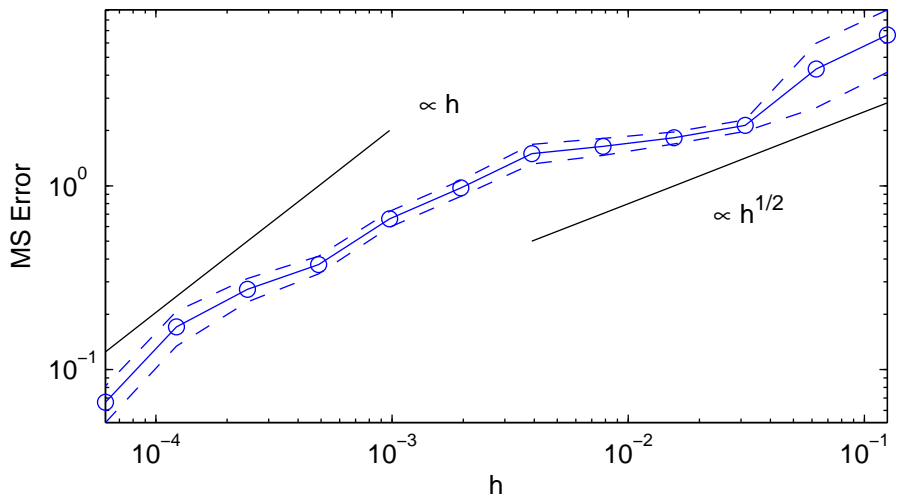}
  \caption{Mean square convergence for the split-step method applied
    to the problem \eqref{eq:illposed}, for which one of the
    sub-systems is ill-posed in the sense of unbounded second moments
    in finite time. The order of convergence is roughly $1/4$ for the
    range of $h$ studied here but appears to improve somewhat towards
    the smaller values.}
  \label{fig:illposeddivergence}
\end{figure}


\section{Conclusions}
\label{sec:conclusions}

We have proposed a framework for analyzing certain popular split-step
methods for jump stochastic differential equations. The framework
consists of a formulation of the methods in operational time via the
split-step kernel function, enabling a meaningful path-wise coupling
even in the limit $h \to 0$. We have also presented concrete
assumptions and \textit{a priori} results which together with the
theoretical convergence results form a basis for the sound use and
further development of these types of methods.

In performing the computational experiments we also developed an actual
implementation of our formulation which is useful in assessing the
split-step solution quality as a function of various parameters. For
example, this is important when experimentally approaching set-ups for
which the proposed sufficient conditions for convergence are violated.

The numerical experiments illustrate the theory in different ways and
also open up some intriguing questions. For instance, how can the
surprisingly good relative efficiency of the second order Strang
method versus the simpler first order method best be explained in the
setting of strong convergence? We also observed a better convergence
over longer time intervals than perhaps intuitively expected, and we
noted that the method converges even in a case for which one of the
split sub-systems was ill-posed in the sense of finite-time moment
explosions.

More practically, an important and natural extension is when the
different sub-systems are evolved by specially designed approximative
methods, a quite common approach in multiscale and multiphysics
problems. The proposed framework should remain a viable approach as
long as these methods may also be analyzed in a path-wise sense, a
point to which we intend to return to in future work.


\section*{Acknowledgment}

The writing of this paper was initiated at the inspiring BIRS workshop
\textit{``Particle-Based Stochastic Reaction-Diffusion Models in
  Biology''}, in Banff, Alberta, Canada, in November 2014. The author
also likes to acknowledge several fruitful discussions with Augustin
Chevallier as well as constructive referee comments.

The work was supported by the Swedish Research Council within the
UPMARC Linnaeus center of Excellence.


\newcommand{\doi}[1]{\href{http://dx.doi.org/#1}{doi:#1}}
\newcommand{\available}[1]{Available at \url{#1}}
\newcommand{\availablet}[2]{Available at \href{#1}{#2}}

\bibliographystyle{abbrvnat}
\bibliography{../../../stefan}

\providecommand{\noopsort}[1]{} \providecommand{\doi}[1]{\texttt{doi:#1}}
  \providecommand{\available}[1]{Available at \texttt{#1}}
  \providecommand{\availablet}[2]{Available at \texttt{#2}}
\begin{thebibliography}{38}
\providecommand{\natexlab}[1]{#1}
\providecommand{\url}[1]{\texttt{#1}}
\expandafter\ifx\csname urlstyle\endcsname\relax
  \providecommand{\doi}[1]{doi: #1}\else
  \providecommand{\doi}{doi: \begingroup \urlstyle{rm}\Url}\fi

\bibitem[Anderson(2008)]{postleap}
D.~F. Anderson.
\newblock Incorporating postleap checks in tau-leaping.
\newblock \emph{J.~Chem.~Phys.}, 128\penalty0 (5):\penalty0 054103--054111,
  2008.
\newblock \doi{10.1063/1.2819665}.

\bibitem[Anderson et~al.(2011)Anderson, Ganguly, and Kurtz]{tauAnderson}
D.~F. Anderson, A.~Ganguly, and T.~G. Kurtz.
\newblock Error analysis of tau-leap simulation methods.
\newblock \emph{Ann.~Appl.~Probab.}, 21\penalty0 (6):\penalty0 2226--2262,
  2011.
\newblock \doi{10.1214/10-AAP756}.

\bibitem[Applebaum(2004)]{LevySDEs}
D.~Applebaum.
\newblock \emph{L{\'e}vy Processes and Stochastic Calculus}, volume~93 of
  \emph{Cambridge Studies in Advanced Mathematics}.
\newblock Cambridge University Press, Cambridge, 2004.

\bibitem[Arampatzis et~al.(2014)Arampatzis, Katsoulakis, and Plech{\'a}{\v
  c}]{parallel_LKMC}
G.~Arampatzis, M.~Katsoulakis, and P.~Plech{\'a}{\v c}.
\newblock Parallelization, processor communication and error analysis in
  lattice kinetic monte carlo.
\newblock \emph{SIAM J.~Numer.~Anal.}, 52\penalty0 (3):\penalty0 1156--1182,
  2014.
\newblock \doi{10.1137/120889459}.

\bibitem[Ball et~al.(2006)Ball, Kurtz, Popovic, and Rempala]{kurtz_multiscale}
K.~Ball, T.~G. Kurtz, L.~Popovic, and G.~Rempala.
\newblock Asymptotic analysis of multiscale approximations to reaction
  networks.
\newblock \emph{Ann.~Appl.~Probab.}, 16\penalty0 (4):\penalty0 1925--1961,
  2006.
\newblock \doi{10.1214/105051606000000420}.

\bibitem[Barkai and Leibler(2000)]{circadian}
N.~Barkai and S.~Leibler.
\newblock Circadian clocks limited by noise.
\newblock \emph{Nature}, 403:\penalty0 267--268, 2000.
\newblock \doi{10.1038/35002258}.

\bibitem[Bauer and Engblom(2015)]{aem_proceeding}
P.~Bauer and S.~Engblom.
\newblock Sensitivity estimation and inverse problems in spatial stochastic
  models of chemical kinetics.
\newblock In A.~Abdulle, S.~Deparis, D.~Kressner, F.~Nobile, and M.~Picasso,
  editors, \emph{Numerical Mathematics and Advanced Applications: ENUMATH
  2013}, volume 103 of \emph{Lecture Notes in Computational Science and
  Engineering}, pages 519--527, Berlin, 2015. Springer.
\newblock
  \href{http://www.dx.doi.org/10.1007/978-3-319-10705-9_51}{doi:10.1007/978-3-319-10705-9\_51}.

\bibitem[Bayati(2013)]{fracdiff_split}
B.~S. Bayati.
\newblock Fractional diffusion-reaction stochastic simulations.
\newblock \emph{J.~Chem.~Phys.}, 138\penalty0 (10):\penalty0 104117, 2013.
\newblock \doi{10.1063/1.4794696}.

\bibitem[Br{\'e}maud(1981)]{pointPQ}
P.~Br{\'e}maud.
\newblock \emph{Point Processes and Queues: Martingale Dynamics}.
\newblock Springer Series in Statistics. Springer, New York, 1981.

\bibitem[Br{\'e}maud(1999)]{BremaudMC}
P.~Br{\'e}maud.
\newblock \emph{Markov Chains: Gibbs Fields, Monte Carlo Simulation, and
  Queues}.
\newblock Number~31 in Texts in Applied Mathematics. Springer, New York, 1999.

\bibitem[Briat et~al.(2014)Briat, Gupta, and Khammash]{ergodic_jsde}
C.~Briat, A.~Gupta, and M.~Khammash.
\newblock A scalable computational framework for establishing long-term
  behavior of stochastic reaction networks.
\newblock \emph{PLoS Comput.~Biol.}, 10\penalty0 (6):\penalty0 e1003669, 2014.
\newblock \doi{10.1371/journal.pcbi.1003669}.

\bibitem[Cao et~al.(2005)Cao, Gillespie, and Petzold]{smalltimesteps}
Y.~Cao, D.~Gillespie, and L.~Petzold.
\newblock Multiscale stochastic simulation algorithm with stochastic partial
  equilibrium assumption for chemically reacting systems.
\newblock \emph{J.~Comput.~Phys.}, 206:\penalty0 395--411, 2005.
\newblock \doi{10.1016/j.jcp.2004.12.014}.

\bibitem[Chevallier and Engblom(2015)]{jsdevarsplit}
A.~Chevallier and S.~Engblom.
\newblock Pathwise error bounds in multiscale variable splitting methods for
  spatial stochastic kinetics, 2015.
\newblock {\it Manuscript}.

\bibitem[E et~al.(2005)E, Liu, and Vanden-Eijnden]{nestedSSA}
W.~E, D.~Liu, and E.~Vanden-Eijnden.
\newblock Nested stochastic simulation algorithm for chemical kinetic systems
  with disparate rates.
\newblock \emph{J.~Chem.~Phys.}, 123\penalty0 (19):\penalty0 194107, 2005.
\newblock \doi{10.1063/1.2109987}.

\bibitem[Engblom(2014)]{jsdestab}
S.~Engblom.
\newblock On the stability of stochastic jump kinetics.
\newblock \emph{Appl. Math.}, 5\penalty0 (19):\penalty0 3217--3239, 2014.
\newblock \doi{10.4236/am.2014.519300}.

\bibitem[{\noopsort{Engblom20093}}{S.~Engblom and L.~Ferm and A.~Hellander and
  P.~L{\"o}tstedt}(2009)]{master_spatial}
{\noopsort{Engblom20093}}{S.~Engblom and L.~Ferm and A.~Hellander and
  P.~L{\"o}tstedt}.
\newblock Simulation of stochastic reaction-diffusion processes on unstructured
  meshes.
\newblock \emph{SIAM J.~Sci.~Comput.}, 31\penalty0 (3):\penalty0 1774--1797,
  2009.
\newblock \doi{10.1137/080721388}.

\bibitem[{\noopsort{Engblom20094}}{S.~Engblom}(2009)]{ssa_parareal}
{\noopsort{Engblom20094}}{S.~Engblom}.
\newblock Parallel in time simulation of multiscale stochastic chemical
  kinetics.
\newblock \emph{Multiscale Model.~Simul.}, 8\penalty0 (1):\penalty0 46--68,
  2009.
\newblock \doi{10.1137/080733723}.

\bibitem[Ethier and Kurtz(1986)]{Markovappr}
S.~N. Ethier and T.~G. Kurtz.
\newblock \emph{Markov Processes: Characterization and Convergence}.
\newblock Wiley series in Probability and Mathematical Statistics. John Wiley
  \& Sons, New York, 1986.

\bibitem[Ganguly et~al.(2014)Ganguly, Alt{\i}ntan, and Koeppl]{AGangulyMMS2014}
A.~Ganguly, D.~Alt{\i}ntan, and H.~Koeppl.
\newblock Jump-diffusion approximation of stochastic reaction dynamics: Error
  bounds and algorithms, 2014.
\newblock \available{http://arxiv.org/abs/1409.4303}.

\bibitem[Gibson and Bruck(2000)]{gillespiemodern}
M.~A. Gibson and J.~Bruck.
\newblock Efficient exact stochastic simulation of chemical systems with many
  species and many channels.
\newblock \emph{J.~Phys.~Chem.}, 104\penalty0 (9):\penalty0 1876--1889, 2000.
\newblock \doi{10.1021/jp993732q}.

\bibitem[Gillespie(1976)]{gillespie}
D.~T. Gillespie.
\newblock A general method for numerically simulating the stochastic time
  evolution of coupled chemical reactions.
\newblock \emph{J.~Comput.~Phys.}, 22\penalty0 (4):\penalty0 403--434, 1976.
\newblock \doi{10.1016/0021-9991(76)90041-3}.

\bibitem[Gillespie(1992)]{gillespieCME}
D.~T. Gillespie.
\newblock A rigorous derivation of the chemical master equation.
\newblock \emph{Phys.~A}, 188:\penalty0 404--425, 1992.
\newblock \doi{10.1016/0378-4371(92)90283-V}.

\bibitem[Gillespie(2001)]{tau_leap}
D.~T. Gillespie.
\newblock Approximate accelerated stochastic simulation of chemically reacting
  systems.
\newblock \emph{J.~Chem.~Phys.}, 115\penalty0 (4):\penalty0 1716--1733, 2001.
\newblock \doi{10.1063/1.1378322}.

\bibitem[Goutsias(2005)]{QSSA2}
J.~Goutsias.
\newblock Quasiequilibrium approximation of fast reaction kinetics in
  stochastic biochemical systems.
\newblock \emph{J.~Chem.~Phys.}, 122\penalty0 (18):\penalty0 184102, 2005.
\newblock \doi{10.1063/1.1889434}.

\bibitem[Haseltine and Rawlings(2002)]{haseltine_HSSA}
E.~L. Haseltine and J.~B. Rawlings.
\newblock Approximate simulation of coupled fast and slow reactions for
  stochastic chemical kinetics.
\newblock \emph{J.~Chem.~Phys.}, 117\penalty0 (15):\penalty0 6959--6969, 2002.
\newblock \doi{10.1063/1.1505860}.

\bibitem[Hellander et~al.(2014)Hellander, Lawson, Drawert, and
  Petzold]{hellander_split}
A.~Hellander, M.~J. Lawson, B.~Drawert, and L.~Petzold.
\newblock Local error estimates for adaptive simulation of the
  reaction-diffusion master equation via operator splitting.
\newblock \emph{J.~Comput.~Phys.}, 266\penalty0 (0):\penalty0 89--100, 2014.
\newblock \doi{10.1016/j.jcp.2014.02.004}.

\bibitem[Jahnke and Alt{\i}ntan(2010)]{jahnke_split}
T.~Jahnke and D.~Alt{\i}ntan.
\newblock Efficient simulation of discrete stochastic reaction systems with a
  splitting method.
\newblock \emph{BIT}, 50\penalty0 (4):\penalty0 797--822, 2010.
\newblock \doi{10.1007/s10543-010-0286-0}.

\bibitem[{\noopsort{Kampen}}{N.~G.~van Kampen}(2004)]{VanKampen}
{\noopsort{Kampen}}{N.~G.~van Kampen}.
\newblock \emph{Stochastic Processes in Physics and Chemistry}.
\newblock Elsevier, Amsterdam, 2nd edition, 2004.

\bibitem[Kurtz(1978)]{KurtzApprox}
T.~G. Kurtz.
\newblock Strong approximation theorems for density dependent {M}arkov chains.
\newblock \emph{Stochastic Process.~Appl.}, 6\penalty0 (3):\penalty0 223--240,
  1978.
\newblock \doi{10.1016/0304-4149(78)90020-0}.

\bibitem[Li(2007)]{tau_li}
T.~Li.
\newblock Analysis of explicit tau-leaping schemes for simulating chemically
  reacting systems.
\newblock \emph{Multiscale Model.~Simul.}, 6\penalty0 (2):\penalty0 417--436,
  2007.
\newblock \doi{10.1137/06066792X}.

\bibitem[Paulsson et~al.(2000)Paulsson, Berg, and Ehrenberg]{noisygeneregul}
J.~Paulsson, O.~G. Berg, and M.~Ehrenberg.
\newblock Stochastic focusing: Fluctuation-enhanced sensitivity of
  intracellular regulation.
\newblock \emph{Proc.~Natl.~Acad.~Sci.~USA}, 97\penalty0 (13):\penalty0
  7148--7153, 2000.
\newblock \doi{10.1073/pnas.110057697}.

\bibitem[Protter(2005)]{protterSDE}
P.~E. Protter.
\newblock \emph{Stochastic Integration and Differential Equations}.
\newblock Number~21 in Stochastic Modelling and Applied Probability. Springer,
  Berlin, 2nd edition, 2005.
\newblock \textit{Version 2.1}.

\bibitem[Rathinam(2015)]{momentbound_jsde}
M.~Rathinam.
\newblock Moment growth bounds on continuous time {M}arkov processes on
  non-negative integer lattices.
\newblock \emph{Quart.~Appl.~Math.}, 73\penalty0 (2):\penalty0 347--364, 2015.
\newblock \doi{10.1090/S0033-569X-2015-01372-7}.

\bibitem[Rathinam et~al.(2005)Rathinam, Petzold, Cao, and
  Gillespie]{tau_leap_anal}
M.~Rathinam, L.~R. Petzold, Y.~Cao, and D.~T. Gillespie.
\newblock Consistency and stability of tau-leaping schemes for chemical
  reaction systems.
\newblock \emph{Multiscale Model.~Simul.}, 4\penalty0 (3):\penalty0 867--895,
  2005.
\newblock \doi{10.1137/040603206}.

\bibitem[Rathinam et~al.(2010)Rathinam, Sheppard, and Khammash]{sensitivitySSA}
M.~Rathinam, P.~W. Sheppard, and M.~Khammash.
\newblock Efficient computation of parameter sensitivities of discrete
  stochastic chemical reaction networks.
\newblock \emph{J.~Chem.~Phys.}, 132\penalty0 (3):\penalty0 034103, 2010.
\newblock \doi{10.1063/1.3280166}.

\bibitem[Situ(2005)]{jumpSDEs}
R.~Situ.
\newblock \emph{Theory of Stochastic Differential Equations with Jumps and
  Applications}.
\newblock Mathematical and Analytical Techniques with Applications to
  Engineering. Springer, New York, 2005.

\bibitem[Strang(1968)]{Strang}
G.~Strang.
\newblock On the construction and comparison of difference schemes.
\newblock \emph{SIAM J.~Numer.~Anal.}, 5\penalty0 (3):\penalty0 506--517, 1968.
\newblock \doi{10.1137/0705041}.

\bibitem[Togashi and Kaneko(2004)]{newsteadystates_RDME}
Y.~Togashi and K.~Kaneko.
\newblock Molecular discreteness in reaction-diffusion systems yields steady
  states not seen in the continuum limit.
\newblock \emph{Phys.~Rev.~E}, 70\penalty0 (2):\penalty0 020901--1, 2004.
\newblock \doi{10.1103/PhysRevE.70.020901}.

\end{thebibliography}

\end{document}